\def\pmod #1{\ ({\rm{mod}}\ #1)}
\def\Z{\Bbb Z}
\def\Q{\Bbb Q}
\def\R{\Bbb R}
\def\C{\Bbb C}
\def\l{\left}
\def\r{\right}
\def\bg{\bigg}
\def\({\bg(}
\def\){\bg)}
\def\t{\text}
\def\f{\frac}
\def\mo{{\rm{mod}\ }}
\def\ls{\leqslant}
\def\gs{\geqslant}
\def\al{\alpha}
\def\ve{\varepsilon}
\def\eq{\equiv}
\def\da{\delta}
\def\la{\lambda}
\def\ind{{\rm Ind}}
\def\aa{{\boldsymbol \alpha}}
\def\bb{{\boldsymbol \beta}}
\def\th{{\boldsymbol \theta}}
\def\e{{\mathbf e}}
\def\u{{\mathbf u}}
\def\v{{\mathbf v}}
\def\U{{\mathbf U}}
\def\V{{\mathbf V}}
\def\Proof{\noindent{\it Proof}}
\theoremstyle{plain}
\newtheorem{theorem}{Theorem}
\newtheorem{lemma}{Lemma}
\newtheorem{corollary}{Corollary}
\newtheorem{conjecture}{Conjecture}
\theoremstyle{definition}
\theoremstyle{remark}
\newtheorem{remark}{Remark}
\begin{document}
\hbox{Preprint, arXiv:2409.08213}
\medskip

\title
[On determinants involving $(\frac{j+k}p)\pm(\frac{j-k}p)$]
{On determinants involving $(\frac{j+k}p)\pm(\frac{j-k}p)$}

\author
[Deyi Chen and Zhi-Wei Sun] {Deyi Chen and Zhi-Wei Sun}

\address {(Deyi Chen)  18 Zheda Road, Xihu District, Hangzhou 310013, Zhejing Province, People's Republic of China}
\email{deyi@zju.edu.cn}

\address{(Zhi-Wei Sun, corresponding author) School of Mathematics, Nanjing
University, Nanjing 210093, People's Republic of China}
\email{zwsun@nju.edu.cn}

\keywords{Determinants, Legendre symbols, linear algebra, Dirichlet characters modulo primes.
\newline \indent 2020 {\it Mathematics Subject Classification}. Primary 11A15, 11C20; Secondary 11L99, 11T99, 15A15, 15A18.
\newline \indent The second author is supported by the Natural Science Foundation of China (grant 12371004).}

\begin{abstract} Let $p=2n+1$ be an odd prime.
In this paper, we mainly evaluate determinants involving $(\frac {j+k}p)\pm(\frac{j-k}p)$, where $(\frac{\cdot}p)$ denotes the Legendre symbol. When $p\eq1\pmod4$, we determine the characteristic polynomials of the matrices
$$\left[\left(\frac{j+k}p\right)+\left(\frac{j-k}p\right)\right]_{1\ls j,k\ls n}
\ \text{and}\ \left[\left(\frac{j+k}p\right)-\left(\frac{j-k}p\right)\right]_{1\ls j,k\ls n},$$
and also establish the general identity
\begin{align*}
&\ \left|x+\left(\frac{j+k}p\right)+\left(\frac{j-k}p\right)+\left(\frac jp\right)y+\left(\frac kp\right)z+\left(\frac{jk}p\right)w\right|_{1\ls j,k\ls n}
\\=&\ (-p)^{(p-5)/4}\left(\left(\frac{p-1}2\right)^2wx-\left(\frac{p-1}2y-1\right)\left(\frac{p-1}2z-1\right)\right).
\end{align*}
\end{abstract}
\maketitle

\section{Introduction}
\setcounter{lemma}{0}
\setcounter{theorem}{0}
\setcounter{corollary}{0}
\setcounter{remark}{0}
\setcounter{equation}{0}

Let $p$ be an odd prime, and let $(\frac{.}{p})$ be the Legendre symbol.
In 1956, Lehmer \cite{L} found all the eigenvalues of the matrices
$$\l[x+\l(\f{j+k+m}p\r)\r]_{1\ls j,k\ls p-1}
\ \t{and}\ \l[a+b\l(\f jp\r)+c\l(\f kp\r)+d\l(\f{jk}p\r)\r]_{1\ls j,k\ls p-1},$$
where $m$ is an integer and $a,b,c,d$ are complex numbers.
Lehmer's work was extended by Carlitz \cite{C} in 1959.
In 2004, motivated by coding theory, Chapman \cite{C04} evaluated the determinants
$$\l|x+\l(\f{-1}p\r)\l(\f{j+k}p\r)\r|_{1\ls j,k\ls(p-1)/2}=\l|x+\l(\f{j+k-1}p\r)\r|_{1\ls j,k\ls(p-1)/2}$$
and $$\l|x+\l(\f{-1}p\r)\l(\f{j+k}p\r)\r|_{0\ls j,k\ls(p-1)/2}=\l|x+\l(\f{j+k-1}p\r)\r|_{1\ls j,k\ls(p+1)/2}$$
by using tools including Dirichlet's class number formula.
Chapman's  conjecture on the value of his ``evil" determinant
$|(\f{j-k}p)|_{0\ls j,k\ls (p-1)/2}$
was confirmed by Vsemirnov \cite{V12,V13} in 2012-2013 via quadratic Gauss sums and matrix decomposition; recently Wang, Wu and Ni \cite{WWN}
extended this by evaluating $$\l|x+\l(\f{j-k}p\r)\r|_{0\ls j,k\ls (p-1)/2}.$$

Motivated by the above work and the paper \cite{S19},
Sun \cite{S24} investigated determinants whose entries are linear combinations of Legendre symbols, and posed some interesting conjectures.  For example, Sun \cite[Remark 1.1]{S24}
conjectured that
\begin{equation*}\l|x+\l(\f{j^2+k^2}p\r)+\l(\f{j^2-k^2}p\r)\r|_{1\ls j,k\ls (p-1)/2}
=\l(\f{p-1}2x-1\r)p^{(p-3)/4}
\end{equation*}
for any prime $p\eq3\pmod4$, which was recently confirmed by Li and Wu \cite{LW} via Jacobi sums.

Recall that for a matrix $A=[a_{jk}]_{1\ls j,k\ls n}$ over a field, its characteristic polynomial
is given by
$f_A(x)=|xI_n-A|$, where $I_n$ is the identity matrix of order $n$.

Now we state our first theorem which provides evaluations of the determinants
$$\l|\l(\f{j+k}p\r)+\l(\f{j-k}p\r)\r|_{1\ls j,k\ls(p-1)/2}\
\t{and}\ \l|\l(\f{j+k}p\r)-\l(\f{j-k}p\r)\r|_{1\ls j,k\ls(p-1)/2}$$
for any odd prime $p$.

\begin{theorem} \label{Th1.1} Let $p$ be an odd prime, and set
\begin{equation}\label{A+}A_+=\l[\l(\f{j+k}p\r)+\l(\f{j-k}p\r)\r]_{1\ls j,k\ls(p-1)/2}
\end{equation}  and
\begin{equation}\label{A-}A_-=\l[\l(\f{j+k}p\r)-\l(\f{j-k}p\r)\r]_{1\ls j,k\ls(p-1)/2}.
\end{equation}

{\rm (i)} Suppose that $p\eq1\pmod4$. Then
the characteristic polynomials of the matrices $A_+$ and $A_-$
are
\begin{equation}\label{A+-}f_{A_+}(x)=(x^2-1)(x^2-p)^{(p-5)/4}
\ \t{and}\ f_{A_-}(x)=(x^2-p)^{(p-1)/4},
\end{equation}
respectively. Consequently,
\begin{equation}\label{detA+-}|A_+|=\l(\f 2p\r)p^{(p-5)/4}
\ \ \t{and}\ \ |A_-|=\l(\f 2p\r)p^{(p-1)/4}.
\end{equation}

{\rm (ii)} When $p>3$ and $p\eq3\pmod4$, we have
\begin{equation}\label{Ap34}|A_+|=|A_-|=(-1)^{(h(-p)-1)/2}p^{(p-3)/4},
\end{equation}
where $h(-p)$ denotes the class number of the imaginary quadratic field $\Q(\sqrt{-p})$.
\end{theorem}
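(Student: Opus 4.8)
The plan is to diagonalize $A_\pm$ through the finite Fourier expansion of the quadratic character. Write $\chi(\cdot)=\left(\frac{\cdot}{p}\right)$, $\zeta=e^{2\pi i/p}$, and let $g=\sum_{a=1}^{p-1}\chi(a)\zeta^a$ be the quadratic Gauss sum, so that $g^2=\chi(-1)p$ and $\chi(m)=\frac1g\sum_{a=1}^{p-1}\chi(a)\zeta^{am}$ for every $m$. As test vectors I would take, for $a=1,\dots,n$ with $n=(p-1)/2$, the cosine and sine vectors $c^{(a)}=(\zeta^{ak}+\zeta^{-ak})_{1\le k\le n}$ and $s^{(a)}=(\zeta^{ak}-\zeta^{-ak})_{1\le k\le n}$; a short Fourier-inversion argument shows each family is a basis of $\mathbb{C}^n$. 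The key computation is that, after replacing each $\chi(j-k)$ by $\chi(j+k')$ with $k'=-k$, the sum over $k=1,\dots,n$ together with its reflection becomes a sum over \emph{all} nonzero residues, which collapses via the Gauss sum to
\begin{align*}
(A_- s^{(a)})_j&=\chi(a)g\big(\zeta^{-aj}-\chi(-1)\zeta^{aj}\big),\\
(A_+ c^{(a)})_j&=\chi(a)g\big(\zeta^{-aj}+\chi(-1)\zeta^{aj}\big)-2\chi(j).
\end{align*}

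For part (i), $p\equiv1\pmod4$, so $\chi(-1)=1$ and $g=\sqrt p$. Then $A_- s^{(a)}=-\chi(a)\sqrt p\,s^{(a)}$, so the sines are exact eigenvectors with eigenvalue $\mp\sqrt p$; since exactly $(p-1)/4$ of the $a\in\{1,\dots,n\}$ are quadratic residues, $A_-$ has eigenvalues $\pm\sqrt p$ each with multiplicity $(p-1)/4$ and $f_{A_-}(x)=(x^2-p)^{(p-1)/4}$. For $A_+$ the cosine identity carries the extra term $-2\chi$, but the vector $\chi=(\chi(j))_j$ itself lies in the cosine span (indeed $\chi=\frac1g\sum_a\chi(a)c^{(a)}$), so in the basis $\{c^{(a)}\}$ the operator $A_+$ is represented by a diagonal-plus-rank-one matrix $\sqrt p\,D-\frac{2}{\sqrt p}\mathbf{x}\mathbf{1}^{\top}$ with $D=\mathrm{diag}(\chi(a))$ and $\mathbf{x}=(\chi(a))$. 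The matrix determinant lemma then gives $f_{A_+}(x)=(x^2-p)^{(p-1)/4}\cdot\frac{x^2-1}{x^2-p}=(x^2-1)(x^2-p)^{(p-5)/4}$. Evaluating the characteristic polynomials at $0$ and simplifying $(-1)^{(p-1)/4}=\left(\frac2p\right)$ yields the stated determinants.

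For part (ii), $p\equiv3\pmod4$, so $\chi(-1)=-1$ and $[\chi(j-k)]$ is antisymmetric; hence $A_+^{\top}=A_-$ and $\det A_+=\det A_-$. Now the same identities read $A_- s^{(a)}=\chi(a)g\,c^{(a)}$ and $A_+ c^{(a)}=-\chi(a)g\,s^{(a)}-2\chi$, with $\chi=\frac1g\sum_a\chi(a)s^{(a)}$ now lying in the sine span. Composing these and using $g^2=-p$, the product $A_+A_-$ acts on the sine basis as $pI_n-2\mathbf{x}\mathbf{x}^{\top}$, whose determinant is $p^{n}\big(1-\tfrac{2n}{p}\big)=p^{\,n-1}=p^{(p-3)/2}$. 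Therefore $(\det A_-)^2=p^{(p-3)/2}$ and $|A_+|=|A_-|=\pm p^{(p-3)/4}$, which settles the magnitude.

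It remains to pin down the sign, and this is where I expect the real work. From $A_-S=gCD$ (the columns $s^{(a)}\mapsto\chi(a)g\,c^{(a)}$) one gets $\det A_-=g^n(\det D)\,\frac{\det C}{\det S}$, where $\det D=\prod_{a=1}^n\chi(a)=(-1)^N$ and $N$ counts the quadratic non-residues in $[1,(p-1)/2]$. I would evaluate $\det S$ and $\det C$ as Chebyshev--Vandermonde determinants, writing $\sin a\theta_k$ and $\cos a\theta_k$ through $U_{a-1}$ and $T_a$ at the nodes $\theta_k=2\pi k/p$, combine with $g^n=(i\sqrt p)^n$, and use that $\det A_-$ is real to force the transcendental factors to cancel down to an explicit power of $-1$ times $(-1)^N$. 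The final step converts this into the class-number form via Dirichlet's class number formula $\sum_{a=1}^{(p-1)/2}\chi(a)=\big(2-(\tfrac2p)\big)h(-p)$ together with $R+N=(p-1)/2$ and the fact that $h(-p)$ is odd, yielding $(-1)^N$ in terms of $(-1)^{(h(-p)-1)/2}$. The main obstacle is precisely this sign bookkeeping: evaluating the ratio $\det C/\det S$ exactly and matching its phase against $g^n$, after which the class number formula supplies the stated $(-1)^{(h(-p)-1)/2}$.
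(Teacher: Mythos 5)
Your part (i) is correct but follows a genuinely different route from the paper. The paper never diagonalizes $A_\pm$ directly: it computes $A_\pm^TA_\pm$ via complete character sums (getting $pI_n$ minus an explicit rank-one/rank-two perturbation), exhibits the two eigenvectors $\left[\left(\frac jp\right)\mp1\right]_j$ of $A_+$ with eigenvalues $\pm1$, shows $p$ is an eigenvalue of $A_+^2$ with multiplicity $n-2$, and then splits the multiplicities of $\pm\sqrt p$ by the integrality of the trace (the coefficient of $x^{n-1}$ must be an integer). Your discrete sine/cosine vectors are exact eigenvectors (I checked the Gauss-sum collapse identities $(A_-s^{(a)})_j=\chi(a)g(\zeta^{-aj}-\chi(-1)\zeta^{aj})$ and $(A_+c^{(a)})_j=\chi(a)g(\zeta^{-aj}+\chi(-1)\zeta^{aj})-2\chi(j)$, and the key fact that $\chi=\frac1g\sum_a\chi(a)c^{(a)}$ lies in the cosine span), so for $p\equiv1\pmod4$ you read off the spectrum of $A_-$ directly and get $f_{A_+}$ from a clean diagonal-plus-rank-one computation: $1+\frac2{\sqrt p}\sum_a\frac{\chi(a)}{x-\sqrt p\,\chi(a)}=\frac{x^2-1}{x^2-p}$. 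This is sharper than the paper's argument (you get the eigenvectors explicitly and never need the integrality trick), at the cost of invoking the evaluation $g=\sqrt p$ of the quadratic Gauss sum, which the paper avoids entirely.

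For part (ii), your magnitude computation is sound and essentially equivalent to the paper's: your $A_+A_-=pI_n-2\mathbf{x}\mathbf{x}^T$ on the sine basis gives $p^{n-1}$, where the paper computes $|A_+^TA_+|=|p\delta_{jk}-2|_{1\le j,k\le n}=p^{n-1}$ by row reduction. But the sign determination---the actual content of part (ii)---is left as an unexecuted plan, and this is a genuine gap, not bookkeeping. Your appeal to realness of $\det A_-$ cannot close it: realness fixes the phase of $g^n\det C/\det S$ only up to $\pm1$, and that $\pm1$ is precisely the question. What remains is the exact sign of $i^n\det C/\det S$, a Schur-type phase evaluation of trigonometric (Chebyshev--Vandermonde) determinants that you sketch but do not perform; note also that $\det[\cos(a\theta_k)]_{1\le a,k\le n}$ is not a straight Vandermonde reduction, since $T_1,\dots,T_n$ have degrees $1,\dots,n$ and do not span the constants, so the column reduction produces a Schur-function correction whose sign needs separate control. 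To your credit, the class-number conversion you propose does work out: with $N=\frac12\left(n-\left(2-\left(\frac2p\right)\right)h(-p)\right)$ one checks in both cases $p\equiv3,7\pmod 8$ that $(-1)^N=(-1)^{(h(-p)+1)/2}$, so your plan is coherent and would succeed exactly if you proved $g^n\det C/\det S=-p^{(p-3)/4}$; but that identity is unproven and is the crux.

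By contrast, the paper's sign argument injects a different idea you would need some substitute for: it passes to the rescaled matrix $A_p=\left[\left(\frac{j^2+jk}p\right)+\left(\frac{j^2-jk}p\right)\right]_{1\le j,k\le n}$, which equals ${\rm diag}\left(\left(\frac jp\right)\right)A_+$ and is diagonalized by the vectors $[\chi^r(1^2),\dots,\chi^r(n^2)]^T$ for a generator $\chi$ of the character group mod $p$, with eigenvalues $\lambda_r=\sum_{k=1}^{p-1}\left(\frac{k+1}p\right)\chi^r(k^2)$. Since $\lambda_n=-1$ is the only real eigenvalue and the rest pair into complex conjugates, $|A_p|<0$ with no Gauss-sum sign needed; Mordell's congruence $\left(\frac{p-1}2\right)!\equiv(-1)^{(h(-p)+1)/2}\pmod p$ then converts $\left(\frac{n!}p\right)$ into the class-number sign. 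So: part (i) stands as a correct alternative proof; part (ii) establishes $|A_+|=|A_-|=\pm p^{(p-3)/4}$ but the sign, which is where both your approach and the paper's concentrate all the difficulty, is missing.
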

\begin{remark} Sun \cite[Conjecture 3.8]{S24}
conjectured that \eqref{detA+-} holds for any prime $p\eq1\pmod4$,  and \eqref{Ap34}
holds for any prime $p>3$ with $p\eq3\pmod 4$.
\end{remark}

Our following theorem provides a further extension of Theorem \ref{Th1.1}.

\begin{theorem}\label{Th1.2}
Let $p>3$ be a prime.

{\rm (i)} Suppose that $p\eq1\pmod4$. Then
\begin{equation}\label{nice}\begin{aligned}
&\ \l|x+\l(\f{j+k}p\r)+\l(\f{j-k}p\r)+\l(\f jp\r)y+\l(\f kp\r)z+\l(\f{jk}p\r)w\r|_{1\ls j,k\ls (p-1)/2}
\\=&\ (-p)^{(p-5)/4}\l(\l(\f{p-1}2\r)^2wx-\l(\f{p-1}2y-1\r)\l(\f{p-1}2z-1\r)\r).
\end{aligned}\end{equation}

{\rm (ii)} When $p\eq3\pmod4$, we have
\begin{equation}\label{xyzw34}\begin{aligned}&\ \l|x+\l(\f{j+k}p\r)+\l(\f{j-k}p\r)+\l(\f jp\r)y+\l(\f kp\r)z+\l(\f{jk}p\r)w\r|_{1\ls j,k\ls (p-1)/2}\\
=&\ (-1)^{(h(-p)-1)/2}p^{(p-3)/4}\l(1-\f{p-1}{2}y-c_p(w+x)+c_p^2(wx-yz)\r)
\\&\ +(-1)^{(h(-p)-1)/2}p^{(p-7)/4}\l(\f{p-1}2+2(d_p-c_p^2)\r)\l(z+\f{p-1}{2}(wx-yz)\r),
\end{aligned}
\end{equation}
where
\begin{equation}
c_p:=\l(2-\l(\f{2}p\r)\r)h(-p)
\ \ \t{and}\ \ d_p:=\sum_{j,k=1}^{(p-1)/2}\(\f{j^2+jk}{p}\).
\end{equation}
\end{theorem}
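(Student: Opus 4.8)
The plan is to realize the matrix in \eqref{nice}/\eqref{xyzw34} as a rank-two perturbation of $A_+$ and apply the matrix determinant lemma. Write $n=(p-1)/2$, let $\mathbf 1=(1,\dots,1)^{\top}$ and $\u=(\l(\f1p\r),\dots,\l(\f np\r))^{\top}$, and set $U=[\,\mathbf 1\ \ \u\,]$ and $B=\begin{pmatrix}x&z\\y&w\end{pmatrix}$. Since $\l(\f{jk}p\r)=\l(\f jp\r)\l(\f kp\r)$, a direct check of entries shows the matrix in question is exactly $A_++UBU^{\top}$. By Theorem \ref{Th1.1} the matrix $A_+$ is nonsingular (eigenvalues $\pm1,\pm\sqrt p$ when $p\eq1\pmod4$, and $|A_+|\ne0$ when $p\eq3\pmod4$), so the matrix determinant lemma gives
\begin{equation*}
\l|A_++UBU^{\top}\r|=|A_+|\cdot\l|I_2+U^{\top}A_+^{-1}UB\r|.
\end{equation*}
Everything thus reduces to computing the $2\times2$ matrix $G:=U^{\top}A_+^{-1}U$, i.e.\ the four numbers $\mathbf 1^{\top}A_+^{-1}\mathbf 1$, $\mathbf 1^{\top}A_+^{-1}\u$, $\u^{\top}A_+^{-1}\mathbf 1$, $\u^{\top}A_+^{-1}\u$.

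The one relation valid for every odd prime is $A_+\mathbf 1=-\u$: the $j$-th entry of $A_+\mathbf 1$ is $\sum_{k=1}^n(\f{j+k}p)+\sum_{k=1}^n(\f{j-k}p)=\sum_{m=j-n}^{j+n}(\f mp)-(\f jp)=-(\f jp)$, since $j-n,\dots,j+n$ is a complete residue system mod $p$. Hence $A_+^{-1}\u=-\mathbf 1$, which already gives $\mathbf 1^{\top}A_+^{-1}\u=-n$ and $\u^{\top}A_+^{-1}\u=-\u^{\top}\mathbf 1=-\sum_{j=1}^n(\f jp)$. For $p\eq1\pmod4$ this last sum vanishes (pair $j$ with $p-j$), whereas for $p\eq3\pmod4$ Dirichlet's class number formula gives $\sum_{j=1}^n(\f jp)=(2-(\f2p))h(-p)=c_p$.

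For part (i) the remaining column of $G$ is equally clean: because $(\f{-1}p)=1$, the reflection $k\mapsto -k$ turns the half-range sum in $A_+\u$ into the full Jacobsthal sum $\sum_{k=1}^{p-1}(\f{k(j+k)}p)=-1$, so $A_+\u=-\mathbf 1$ and $A_+^{-1}\mathbf 1=-\u$; thus $G=\begin{pmatrix}0&-n\\-n&0\end{pmatrix}$. Expanding the $2\times2$ determinant gives $|I_2+U^{\top}A_+^{-1}UB|=(1-nz)(1-ny)-n^2wx$, and combining this with $|A_+|=(\f2p)p^{(p-5)/4}$ from \eqref{detA+-} and the identity $(\f2p)=(-1)^{(p-1)/4}$ (valid for $p\eq1\pmod4$) reproduces the right-hand side of \eqref{nice}.

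The real work is part (ii). When $p\eq3\pmod4$ the reflection $k\mapsto-k$ carries a sign, and one finds instead $A_+\u=\mathbf 1+2\mathbf s$, where $\mathbf s=\l(\sum_{k=1}^n(\f{k(j+k)}p)\r)_{1\ls j\ls n}$; in particular $\mathrm{span}\{\mathbf 1,\u\}$ is no longer $A_+$-invariant, which is the source of the extra terms. Writing $\al:=\mathbf 1^{\top}A_+^{-1}\mathbf 1$ and $\be:=\u^{\top}A_+^{-1}\mathbf 1$ and using $\mathbf 1^{\top}A_+^{-1}\u=-n$, $\u^{\top}A_+^{-1}\u=-c_p$, a routine expansion yields
\begin{equation*}
\l|I_2+U^{\top}A_+^{-1}UB\r|=1-ny-c_pw+\al x+\be z+(n\be-\al c_p)(xw-yz),
\end{equation*}
so, after multiplying by $|A_+|=(-1)^{(h(-p)-1)/2}p^{(p-3)/4}$ from \eqref{Ap34}, matching against \eqref{xyzw34} reduces the whole theorem to the two resolvent identities
\begin{equation*}
\mathbf 1^{\top}A_+^{-1}\mathbf 1=-c_p\qquad\t{and}\qquad\u^{\top}A_+^{-1}\mathbf 1=\f{n+2(d_p-c_p^2)}p.
\end{equation*}
These are the main obstacle. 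I would attack them by computing $A_+^{-1}\mathbf 1$ explicitly: from $\mathbf 1=A_+\u-2\mathbf s$ we get $A_+^{-1}\mathbf 1=\u-2A_+^{-1}\mathbf s$, so it suffices to understand $\mathbf s$ and its $A_+$-image inside a small $A_+$-invariant subspace containing $\mathbf 1,\u,\mathbf s$, and then to reduce $\mathbf 1^{\top}A_+^{-1}\mathbf s$ and $\u^{\top}A_+^{-1}\mathbf s$ to half-range character sums. The quantity $d_p$ enters naturally here, since $\mathbf 1^{\top}\mathbf s=\sum_{j,k}(\f{k(j+k)}p)=\sum_{j}(\f jp)\sum_k(\f{j+k}p)=d_p$, while $c_p$ enters through $\u^{\top}\mathbf 1$. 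The delicate point—and the step I expect to be hardest—is the evaluation of the half-range Jacobsthal-type sums $\sum_{k=1}^n(\f{k(j+k)}p)$ (equivalently, pinning down how $\mathbf s$ and $A_+\mathbf s$ decompose relative to $\mathbf 1$ and $\u$), which is exactly where both the class number and $d_p$ must appear.
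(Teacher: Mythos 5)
Your part (i) is correct and coincides with the paper's own argument: writing the matrix as $A_++UBU^{T}$ and invoking the matrix determinant lemma is exactly what the paper's auxiliary Theorem \ref{Th3.1} packages (its $\al_1,\dots,\al_4$ are just $|A|(1+\u_0^TA^{-1}\u_0)$, etc.), and your two relations $A_+\mathbf 1=-\u$ and $A_+\u=-\mathbf 1$ are the paper's $A^{-1}\u_1=-\u_0$ and $A^{-1}\u_0=-\u_1$, which it derives from the eigenvectors of Lemma \ref{Lem2.3}; your $2\times 2$ determinant and the sign bookkeeping check out against \eqref{nice}.

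Part (ii) contains a genuine gap: you reduce everything (correctly --- I verified your expansion $|I_2+GB|=1-ny-c_pw+\al x+\be z+(n\be-\al c_p)(wx-yz)$) to the two identities $\al=\mathbf 1^TA_+^{-1}\mathbf 1=-c_p$ and $\be=\u^TA_+^{-1}\mathbf 1=(n+2(d_p-c_p^2))/p$, but you prove neither, and your proposed attack would not close them. The first is in fact easy, and you missed the mechanism: since $(\f jp)\bigl((\f{j+k}p)+(\f{j-k}p)\bigr)=(\f{j^2+jk}p)+(\f{j^2-jk}p)$ is even in $j$, Lemma \ref{le3.1} gives the row relation $\u^TA_+=-\mathbf 1^T$ for every odd prime (no sign of $(\f{-1}p)$ intervenes on the row side), whence $\mathbf 1^TA_+^{-1}=-\u^T$ and $\al=-\u^T\mathbf 1=-c_p$. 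For $\be$, your plan is circular and unlikely to terminate: the relation $A_+^{-1}\mathbf 1=\u-2A_+^{-1}\mathbf s$ carries no new information, because $\mathbf s$ is defined by $2\mathbf s=A_+\u-\mathbf 1$; there is no reason $\mathrm{span}\{\mathbf 1,\u,\mathbf s\}$ should be $A_+$-invariant; and the individual half-range sums $\sum_{k=1}^n(\f{k(j+k)}p)$ admit no elementary closed form --- the paper never evaluates them, only the aggregate $d_p$ survives. The idea you are missing is the Gram identity of Lemma \ref{le5.1}: for $p\eq3\pmod4$ one has $A_-^T=A_+$ and hence $A_+A_-=pI_n-2\u\u^T$. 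Applying this to $\mathbf 1$ and using $A_+\mathbf 1=-\u$ and $\u^T\mathbf 1=c_p$ shows that $\th:=A_-\mathbf 1-2c_p\mathbf 1$ satisfies $A_+\th=p\mathbf 1$ (the paper's \eqref{A-th}), so $\be=\f1p\u^T\th$, and a short character-sum computation --- this is where $d_p$ and $c_p^2$ actually enter --- yields $\be=(n+2(d_p-c_p^2))/p$, the paper's \eqref{dp}. With these two values your formula does match \eqref{xyzw34} upon multiplying by $|A_+|$ from \eqref{Ap34}; without them, the proof of part (ii) is incomplete.
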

\begin{remark} Theorem \ref{Th1.2}(i) was conjectured by Sun \cite[Conjecture 3.8(i)]{S24}.
By \cite[Theorem 3, p.\,346]{BS} or \cite[p.\,238]{IR}, for any prime $p>3$ with $p\eq3\pmod4$
we have
\begin{equation}\label{cp}\sum_{k=1}^{(p-1)/2}\l(\f kp\r)=c_p.
\end{equation}

\end{remark}

It is easy to see that Theorem \ref{Th1.2}(ii) has the following consequence not depending on $d_p$.

\begin{corollary}
For any prime $p>3$ with $p\eq3\pmod4$, we have
\begin{equation*}
\begin{aligned}
&\l|x+\l(\f{j+k}p\r)+\l(\f{j-k}p\r)+\l(\f jp\r)y\r|_{1\ls j,k\ls (p-1)/2}\\
=&\ (-1)^{(h(-p)-1)/2}\ p^{(p-3)/4}\l(1-c_px-\f{p-1}2y\r)
\end{aligned}
\end{equation*}
and
\begin{equation*}
\begin{aligned}
&\l|\l(\f{j+k}p\r)+\l(\f{j-k}p\r)+\l(\f jp\r)y+\l(\f{jk}p\r)w\r|_{1\ls j,k\ls (p-1)/2}\\
=&\ (-1)^{(h(-p)-1)/2}\ p^{(p-3)/4}\l(1-c_pw-\f{p-1}2y\r).
\end{aligned}
\end{equation*}
\end{corollary}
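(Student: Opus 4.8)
The plan is to obtain both identities as immediate specializations of Theorem~\ref{Th1.2}(ii), with no new computation beyond substitution. The key structural observation is that on the right-hand side of \eqref{xyzw34} the \emph{entire} contribution involving $d_p$ appears as the product of the quantity $(-1)^{(h(-p)-1)/2}p^{(p-7)/4}\left(\frac{p-1}2+2(d_p-c_p^2)\right)$ with the factor $\left(z+\frac{p-1}2(wx-yz)\right)$. Hence, to produce a formula free of $d_p$, it suffices to choose the free parameters $x,y,z,w$ so that this last factor vanishes identically.

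For the first identity I would set $z=w=0$ in \eqref{xyzw34}. Then $wx-yz=0$, so the $d_p$-factor becomes $z+\frac{p-1}2\cdot 0=0$ and the second summand drops out entirely. The first summand collapses to $(-1)^{(h(-p)-1)/2}p^{(p-3)/4}\left(1-\frac{p-1}2y-c_px\right)$, which is precisely the asserted value.

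For the second identity I would instead set $x=z=0$. Once more $wx-yz=0$ and $z=0$, so the $d_p$-factor is zero and the second summand vanishes; the first summand then reduces to $(-1)^{(h(-p)-1)/2}p^{(p-3)/4}\left(1-\frac{p-1}2y-c_pw\right)$, as claimed.

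There is essentially no obstacle here, since the corollary is a direct consequence of the already-established Theorem~\ref{Th1.2}(ii). The only point deserving care is to confirm that both chosen substitutions genuinely annihilate the factor $z+\frac{p-1}2(wx-yz)$—which they do precisely because each of them sets $z=0$ together with $wx=0$—thereby explaining why these two determinants, in contrast to the general evaluation, carry no dependence on the sum $d_p$.
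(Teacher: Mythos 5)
Your proof is correct and matches the paper's intent exactly: the paper states the corollary as an immediate consequence of Theorem \ref{Th1.2}(ii) without further proof, and the substitutions $z=w=0$ and $x=z=0$ you make are precisely the specializations needed, with your observation that each choice forces $z+\frac{p-1}2(wx-yz)=0$ (hence kills the $d_p$-term) being the whole point.
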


Let $p>3$ be a prime with $p\eq3\pmod4$. Sun \cite[Conjecture 3.8(ii)]{S24} conjectured that
\begin{equation}\label{3.8(ii)}\begin{aligned}&\ \l|x+\l(\f{j+k}p\r)+\l(\f{j-k}p\r)+\l(\f jp\r)y+\l(\f{jk}p\r)w\r|_{1\ls j,k\ls (p-1)/2}
\\=&\ (-1)^{\f{h(-p)+1}2}p^{(p-3)/4}\l(\f{p-1}2y-1+c_p(w+x)-\l(\f 2p\r)\f{16q_p}pwx\r)
\end{aligned}
\end{equation}
for some integer $q_p$ only depending on $p$. Putting $z=0$ in \eqref{xyzw34} we find that
\eqref{3.8(ii)} holds for
$$q_p=\l(\f 2p\r)\f{c_p^2-d_p^2+(d_p+(p-1)/2)^2}{16}.$$
This is an integer under our following conjecture.

\begin{conjecture}\label{Conj1.1} Let $p>3$ be a prime. Then
\begin{equation} d_p\eq\begin{cases}4(1-(-1)^{(p-1)/8})\pmod {16}&\t{if}\ p\eq1\pmod8,
\\-2\pmod{16}&\t{if}\ p\eq5\pmod 8,\\ p+1-(-1)^{(h(-p)-1)/2}h(-p)\pmod {16}&\t{if}\ p\eq3\pmod8,
\\2-(-1)^{(h(-p)-1)/2}h(-p)\pmod{16}&\t{if}\ p\eq7\pmod8.
\end{cases}
\end{equation}
\end{conjecture}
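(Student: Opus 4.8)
The plan is to first reduce the double character sum $d_p$ to a single, more symmetric ``triangular'' character sum, and then to extract its value modulo $16$ by $2$-adic and class-number arguments. Write $\chi=\left(\frac{\cdot}{p}\right)$, $N=\frac{p-1}2$, $\epsilon=\left(\frac{-1}{p}\right)$ and $c_p'=\sum_{m=1}^N\chi(m)$. Since $\left(\frac{j^2+jk}{p}\right)=\chi(j)\chi(j+k)$, substituting $l=j+k$ and splitting the range of $l$ at $N$ gives
\begin{equation*}
d_p=\sum_{1\le j<l\le N}\chi(j)\chi(l)+\sum_{j=1}^N\chi(j)\sum_{l=N+1}^{j+N}\chi(l).
\end{equation*}
The first sum equals $\tfrac12\big((c_p')^2-N\big)$ upon squaring $c_p'$. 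In the second, the reflection $l\mapsto p-l$ (so that $\chi(l)=\epsilon\chi(p-l)$) turns it into $\epsilon(c_p')^2-\epsilon R$, where
\begin{equation*}
R:=\sum_{\substack{a,b\ge1\\ a+b\le N}}\chi(ab)
\end{equation*}
is a character sum over a triangular region. Collecting terms, $d_p=-\frac{p-1}{4}-R$ when $p\equiv1\pmod4$ (so $\epsilon=1$, $c_p'=0$), and $2d_p=2R-c_p^2-N$ when $p\equiv3\pmod4$ (so $\epsilon=-1$ and, by Dirichlet's class number formula, $c_p'=c_p$). Thus everything hinges on the residue of $R$ modulo a suitable power of $2$.

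For $p\equiv3\pmod4$ the identity $2d_p=2R-c_p^2-N$ reduces the target $d_p\equiv(-1)^{(h(-p)-1)/2}c_p\pmod8$ to the congruence $2R\equiv c_p^2+N+2(-1)^{(h(-p)-1)/2}c_p\pmod{16}$. I would evaluate $R=\sum_{s=2}^N F(s)$ with $F(s)=\sum_{a=1}^{s-1}\chi\big(a(s-a)\big)$ by comparing each $F(s)$ with the complete sum $\sum_{a=0}^{p-1}\chi\big(a(s-a)\big)=-\epsilon=1$; the defect between the partial sum $F(s)$ and this complete value is what carries the class-number information. The sign $(-1)^{(h(-p)-1)/2}$ is precisely $h(-p)\bmod4$, so the decisive inputs are a congruence for $h(-p)$ modulo $4$ (genus theory for $\mathbb{Q}(\sqrt{-p})$) together with the exact relation $c_p=\big(2-\left(\frac2p\right)\big)h(-p)$, which controls $c_p^2$ modulo $16$.

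For $p\equiv1\pmod4$ we have the clean identity $d_p=-\frac{p-1}{4}-R$, so the conjecture reduces to determining $R$ modulo $16$. Here $c_p'=0$ and the Legendre sequence is palindromic ($\chi(p-m)=\chi(m)$), which I would exploit to fold $R$ into the partial sums $P_n=\sum_{m\le n}\chi(m)$ and to separate the analysis by $p\bmod8$. For $p\equiv5\pmod8$ the expected outcome $d_p\equiv-2\pmod{16}$ should follow once $R$ is pinned modulo $16$ in terms of $\frac{p-1}{4}$ alone. For $p\equiv1\pmod8$ the appearance of $(-1)^{(p-1)/8}$ signals biquadratic residue information about $2$ modulo $p$: I would evaluate the relevant piece of $R$ through quartic Jacobi or Gauss sums, or via the representation $p=a^2+b^2$ with $a\equiv1\pmod4$, since the exponent $(p-1)/8$ records whether $2$ is a fourth power modulo $p$.

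The elementary reduction to $R$ is routine, and so is the leading-order evaluation producing the $\frac{p-1}{4}$, $N$ and $c_p^2$ terms. The genuine difficulty is entirely $2$-adic: one must track $R$ modulo $16$ finely enough to see the two delicate invariants, namely $h(-p)\bmod4$ when $p\equiv3\pmod4$ and the biquadratic residue symbol $(-1)^{(p-1)/8}$ when $p\equiv1\pmod8$. I expect this to require congruences for class numbers (genus theory) on the one side and rational biquadratic reciprocity on the other, and it is here --- not in the character-sum manipulation --- that the main obstacle lies. The case $p\equiv5\pmod8$, which involves neither fine invariant, should be the easiest and a natural first target to calibrate the method.
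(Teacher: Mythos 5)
There is a fundamental mismatch here: the statement you were asked to prove is Conjecture \ref{Conj1.1}, which the paper explicitly does \emph{not} prove. The authors write ``Though we are unable to confirm Conjecture \ref{Conj1.1}, we can determine $d_p$ modulo $4$,'' and the only proved result in this direction is the much weaker Theorem \ref{Th1.3}, namely $d_p\eq-\f{p-1}2\pmod4$, obtained by a short counting argument (expanding $4N=\sum_{j,k}(1+(\f jp))(1+(\f{j+k}p))$ together with an elementary evaluation of $\sum_{j,k=1}^n(\f{j+k}p)$). So there is no proof in the paper to match yours against; the mod $16$ and mod $8$ congruences remain open.

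Your proposal, for its part, is a reduction rather than a proof. The elementary part is correct --- I checked that your identities
$d_p=-\f{p-1}4-R$ for $p\eq1\pmod4$ and $2d_p=2R-c_p^2-\f{p-1}2$ for $p\eq3\pmod4$, with $R=\sum_{a,b\gs1,\,a+b\ls(p-1)/2}(\f{ab}p)$, hold numerically (e.g.\ $p=5,7,13$ give $R=1,3,-1$ and recover $d_5=-2$, $d_7=1$, $d_{13}=-2$), and your use of $c_p'=c_p$ via the class number formula matches \eqref{u10} in the paper. But the entire content of the conjecture now sits in the unevaluated quantity $R$ modulo $16$ (indeed modulo $32$ in the $p\eq3\pmod4$ case, since your identity only determines $2d_p$), and at precisely this point your argument switches to ``I would evaluate\ldots'', ``I expect this to require\ldots''. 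No mechanism is given that actually extracts $h(-p)\bmod 4$ or the biquadratic character $(-1)^{(p-1)/8}$ from $R$; comparing the partial sums $F(s)$ with the complete sum $\sum_{a=0}^{p-1}(\f{a(s-a)}p)=-(\f{-1}p)$ only reproduces the kind of leading-order information that already yields the paper's Theorem \ref{Th1.3}, not the $2$-adic refinement. So the proposal correctly isolates where the difficulty lies but does not overcome it: what you have is a clean reformulation of an open problem, not a proof, and you should not present the final congruences as established.
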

\begin{remark} The values of  $d_p$ for odd primes $p<50$ are listed below:
\begin{gather*} d_3=-1,\ d_5=-2,\ d_7=1,\ d_{11}=-5.\ d_{13}=-2,\ d_{17}=0,\ d_{19}=-13,
\\ d_{23}=5,\ d_{29}=-18,\ d_{31}=5,\ d_{37}=-2,\ d_{41}=-8,\ d_{43}=-21,\ d_{47}=13.
\end{gather*}
\end{remark}

Though we are unable to confirm Conjecture \ref{Conj1.1}, we can determine $d_p$ modulo $4$
for each odd prime $p$. Namely, we have the following result.

\begin{theorem}\label{Th1.3} Let $p$ be any odd prime. Then
\begin{equation}\label{dp4}d_p\eq-\f{p-1}2\pmod4.
\end{equation}
\end{theorem}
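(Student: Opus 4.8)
The plan is to introduce the companion sum
$$d_p'=\sum_{j,k=1}^{(p-1)/2}\l(\f{j^2-jk}p\r)=\sum_{j=1}^{(p-1)/2}\l(\f jp\r)\sum_{k=1}^{(p-1)/2}\l(\f{j-k}p\r),$$
and to deduce \eqref{dp4} from the two facts $d_p+d_p'=-\f{p-1}2$ and $d_p'\eq0\pmod4$; indeed $d_p=-\f{p-1}2-d_p'\eq-\f{p-1}2\pmod4$ once the second fact is known. Throughout write $n=(p-1)/2$ and abbreviate $\chi(a)=\l(\f ap\r)$, noting $\chi(i)=\pm1$ for $1\ls i\ls 2n=p-1$.

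First I would establish $d_p+d_p'=-n$. For a fixed $j\in\{1,\cs,n\}$ the $2n$ numbers $j\pm k$ with $1\ls k\ls n$ are precisely the integers in $[j-n,j+n]\sm\{j\}$, that is, a complete residue system modulo $p$ with the class of $j$ deleted; hence $\sum_{k=1}^n[\chi(j+k)+\chi(j-k)]=-\chi(j)$. Multiplying by $\chi(j)$ and summing over $j$ gives $d_p+d_p'=-\sum_{j=1}^n\chi(j)^2=-n$, so it remains only to prove $d_p'\eq0\pmod4$.

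For this, splitting the inner sum of $d_p'$ according to $k<j$ and $k>j$ (the diagonal term $k=j$ vanishing) and using $\chi(j-k)=\chi(-1)\chi(k-j)$ in the range $k>j$ yields
$$d_p'=E+\l(\f{-1}p\r)W,\qquad E=\sum_{1\ls m<j\ls n}\chi(m)\chi(j),\qquad W=\sum_{\substack{m,l\gs1\\ m+l\ls n}}\chi(m)\chi(l).$$
The shift $(m,l)\mapsto(m,m+l)$ rewrites $W=\sum_{1\ls m<j\ls n}\chi(m)\chi(j-m)$, which aligns it with $E$, so that
$$d_p'=\sum_{1\ls m<j\ls n}\chi(m)\l[\chi(j)+\l(\f{-1}p\r)\chi(j-m)\r].$$
Each bracket lies in $\{0,\pm2\}$, so $d_p'$ is even, and since $\chi(m)\chi(j)\eq1\pmod2$ we get
$$\tfrac12 d_p'\eq M:=\#\l\{(m,j):1\ls m<j\ls n,\ \chi(j)=\l(\f{-1}p\r)\chi(j-m)\r\}\pmod2.$$

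Finally I would compute the parity of $M$. Writing $c=j-m$ turns the condition into $\chi(j)=(\frac{-1}p)\chi(c)$ with $1\ls c<j\ls n$; let $R$ and $N$ be the numbers of quadratic residues and nonresidues in $\{1,\cs,n\}$, so $R+N=n$. If $p\eq1\pmod4$ then $(\frac{-1}p)=1$ and $M$ counts the unordered pairs with $\chi(c)=\chi(j)$, i.e. $M=\bi R2+\bi N2$; here $\chi(p-i)=\chi(i)$ forces $\sum_{i=1}^n\chi(i)=0$, so $R=N=n/2$ and $M=2\bi{n/2}2$ is even. If $p\eq3\pmod4$ then $(\frac{-1}p)=-1$ and $M$ counts the pairs with $\chi(c)\ne\chi(j)$, i.e. $M=RN$; here $n$ is odd, so exactly one of $R,N$ is even and $M$ is even. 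In both cases $M\eq0\pmod2$, whence $d_p'\eq0\pmod4$ and \eqref{dp4} follows. The main obstacle is structural rather than arithmetic: the crux is to spot the companion sum $d_p'$ with its clean evaluation $d_p+d_p'=-n$ and then to align $W$ with $E$ via the shift $(m,l)\mapsto(m,m+l)$, after which everything collapses to the parity of an elementary residue/nonresidue pair count—unlike the $p\eq3\pmod4$ parts of Theorems \ref{Th1.1} and \ref{Th1.2}, no class number or Gauss sum input is needed.
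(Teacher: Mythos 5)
Your proof is correct, and it takes a genuinely different route from the paper's. The paper's proof expands the fourfold count
$4N=\sum_{j,k=1}^n\bigl(1+(\frac jp)\bigr)\bigl(1+(\frac{j+k}p)\bigr)$, which expresses $d_p$ through $n^2$, $n\sum_{j=1}^n(\frac jp)$, and the double sum $\sum_{j,k=1}^n(\frac{j+k}p)$; the latter is evaluated in a separate lemma (it equals $2\sum_{k=1}^n k(\frac kp)$ for $p\equiv1\pmod4$ and $-\sum_{k=1}^n(\frac kp)$ for $p\equiv3\pmod4$), and the congruence is then extracted case by case using parity facts such as $-2\sum_k k(\frac kp)\equiv 2\sum_k k\pmod 4$. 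You instead pair $d_p$ with the companion sum $d_p'$ and prove the \emph{exact} identity $d_p+d_p'=-\frac{p-1}2$ from the complete-residue-system observation $\sum_{k=1}^n\bigl[(\frac{j+k}p)+(\frac{j-k}p)\bigr]=-(\frac jp)$ — an identity that does appear in the paper (in the proof of Theorem \ref{Th1.2}(ii), where $A\mathbf{u}_0=-\mathbf{u}_1$ and $\mathbf{u}_1^TA=-\mathbf{u}_0^T$ are derived), but is used there for a different purpose and never in the proof of Theorem \ref{Th1.3}. The whole burden then shifts to showing $4\mid d_p'$, which you settle by splitting off the diagonal, aligning $W$ with $E$ via $(m,l)\mapsto(m,m+l)$, and reducing to the parity of the pair count $M$, namely $2\binom{n/2}{2}$ when $p\equiv1\pmod4$ (using $R=N=n/2$) and $RN$ with $R+N=n$ odd when $p\equiv3\pmod4$. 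I checked every step: the bracket values lie in $\{0,\pm2\}$ since $1\le j-m\le n-1$ is prime to $p$, the reduction $\frac12 d_p'\equiv M\pmod2$ uses only $\chi(m)\equiv1\pmod2$, and the identity is confirmed numerically by the paper's table (e.g. $d_{19}=-13$ gives $d_{19}'=-9+13=4$, and $d_{29}=-18$ gives $d_{29}'=4$). What your route buys is the clean complementary identity $d_p+d_p'=-\frac{p-1}2$, an exact statement stronger than the congruence and uniform in $p\bmod 4$; what the paper's route buys is the closed form for $\sum_{j,k}(\frac{j+k}p)$, which ties $d_p$ to $\sum_k k(\frac kp)$ and might be better suited to attacking the finer congruences of Conjecture \ref{Conj1.1}. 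Both arguments are elementary and, as you note, need no class-number or Gauss-sum input.
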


We are going to prove Theorem \ref{Th1.1} in the next section.
In Section 3 we provide a useful auxiliary theorem. We will prove Theorems \ref{Th1.2} and \ref{Th1.3}
in Section 4. Our tools include linear algebra, the General Matrix-Determinant Lemma and Dirichlet characters.

Based on Sun's work \cite{S24} and our useful auxiliary theorem in Section 3, we are also able to confirm some other conjectures in \cite{S24} including Conjectures 3.1, 3.7(i), 4.1, and Conjecture 3.9 in the case $p\eq1\pmod4$.

\section{Proof of Theorem \ref{Th1.1}}
\setcounter{lemma}{0}
\setcounter{theorem}{0}
\setcounter{corollary}{0}
\setcounter{remark}{0}
\setcounter{equation}{0}

The following lemma is well known, see, e.g., \cite[p.\,58]{BEW}.

\begin{lemma}\label{le3.1}
Let $p$ be an odd prime. For any $b,c\in\Z$ we have
\begin{equation*}
\sum_{x=0}^{p-1}\l(\f{x^2+bx+c}p \r) =
\begin{cases}
p-1 & \t{if}\ p\mid b^2-4c, \\
-1 & \t{otherwise}.
\end{cases}
\end{equation*}
\end{lemma}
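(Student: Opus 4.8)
The plan is to reduce the general quadratic character sum to a canonical one by completing the square, and then to evaluate that canonical sum by a short point-counting argument. Since $p$ is odd, $\legendre{4}{p}=1$, so multiplying the argument by $4$ does not change the Legendre symbol. Setting $D:=b^2-4c$ for the discriminant, I would use the identity $4(x^2+bx+c)=(2x+b)^2-D$ to write
\[
\sum_{x=0}^{p-1}\legendre{x^2+bx+c}{p}=\sum_{x=0}^{p-1}\legendre{(2x+b)^2-D}{p}.
\]
Because $\gcd(2,p)=1$, the map $x\mapsto 2x+b$ permutes the residues modulo $p$, so this equals $S:=\sum_{y=0}^{p-1}\legendre{y^2-D}{p}$. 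Noting that $p\mid b^2-4c$ is the same as $p\mid D$, it then suffices to show that $S=p-1$ when $p\mid D$ and $S=-1$ otherwise.

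The case $p\mid D$ is immediate: then $S=\sum_{y=0}^{p-1}\legendre{y^2}{p}$, and $\legendre{y^2}{p}$ equals $1$ for $p\nmid y$ and $0$ for $p\mid y$, so $S=p-1$.

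For the main case $p\nmid D$, I would exploit the elementary square-root count: for every residue $t$ modulo $p$, the number of $z$ with $z^2\eq t\pmod p$ equals $1+\legendre{t}{p}$ (the value $t\eq0$ being checked separately, where both sides are $1$). Summing this over $y$ with $t=y^2-D$ gives
\[
p+S=\sum_{y=0}^{p-1}\l(1+\legendre{y^2-D}{p}\r)=\#\l\{(y,z)\colon z^2-y^2\eq-D\pmod p\r\}.
\]
Factoring $z^2-y^2=(z-y)(z+y)$ and substituting $u=z-y$, $v=z+y$ — a bijection of $\F_p^2$ since $p$ is odd — reduces the right-hand count to $\#\{(u,v)\colon uv\eq-D\pmod p\}$. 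As $-D\not\eq0$, each nonzero $u$ determines exactly one $v$, while $u\eq0$ gives none, so there are precisely $p-1$ pairs. Hence $p+S=p-1$, i.e. $S=-1$, as desired.

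All of the steps are routine; the only points needing genuine care are the boundary case $t\eq0$ in the square-root count and the verification that $(y,z)\mapsto(u,v)$ is a bijection (both of which hinge on $p$ being odd). There is no real obstacle here, as this is a classical evaluation; an alternative would be to expand $\legendre{y^2-D}{p}$ into multiplicative characters and apply orthogonality, but the counting argument above is the most transparent route and is the one I would present.
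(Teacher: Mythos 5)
Your proof is correct and complete: the reduction via $4(x^2+bx+c)=(2x+b)^2-(b^2-4c)$, the square-root count $\#\{z: z^2\equiv t \pmod p\}=1+\legendre{t}{p}$, and the hyperbola count for $uv\equiv -D\pmod p$ are all handled properly, including the boundary cases that depend on $p$ being odd. Note that the paper itself gives no proof of this lemma, simply citing it as well known from Berndt--Evans--Williams; your argument is essentially the standard textbook proof found there, so there is nothing to reconcile.
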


As usual, for a matrix $A$ we use $A^T$ to denote its transpose.

\begin{lemma}\label{le5.1}
Let $p=2n+1$ be an odd prime. Define $A_+$ and $A_-$ as in Theorem \ref{Th1.1}.
Then
\begin{equation}\label{A+2}A_+^TA_+=pI_n-\l[2+(1+(-1)^n)\l(\f{jk}p\r)\r]_{1\ls j,k\ls n}
\end{equation}
and
\begin{equation}\label{A-2}A_-^TA_-=pI_n-\l[(1-(-1)^n)\l(\f{jk}p\r)\r]_{1\ls j,k\ls n}.
\end{equation}
\end{lemma}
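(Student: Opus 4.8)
The plan is to compute the $(i,k)$ entry of each Gram-type matrix $A_\pm^TA_\pm$ directly and reduce it, via the multiplicativity of the Legendre symbol, to complete character sums modulo $p$ that are covered by Lemma \ref{le3.1}. Write $\chi(m)=\l(\f mp\r)$, so that $\chi$ is completely multiplicative with $\chi(-1)=(-1)^n$. Since $(A_\pm^TA_\pm)_{ik}=\sum_{j=1}^n(A_\pm)_{ji}(A_\pm)_{jk}$, I would expand the product $[\chi(j+i)\pm\chi(j-i)][\chi(j+k)\pm\chi(j-k)]$ and use $\chi(a)\chi(b)=\chi(ab)$ to turn each entry into a sum over $j$ of four terms of the shape $\chi\big((j\pm i)(j\pm k)\big)$: the two ``like-sign'' products $(j+i)(j+k)$, $(j-i)(j-k)$ and the two ``mixed'' products $(j+i)(j-k)$, $(j-i)(j+k)$, carrying signs $+,+,+,+$ for $A_+$ and $+,-,-,+$ for $A_-$.

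The key step is to pass from the range $1\ls j\ls n$ to a complete residue system modulo $p$. Replacing $j$ by $-j$ multiplies the row factor $\chi(j+i)\pm\chi(j-i)$ by $\pm\chi(-1)$ (with the sign depending on whether we treat $A_+$ or $A_-$), so the product of the $i$- and $k$-factors is multiplied by $\chi(-1)^2=1$; hence each summand is invariant under $j\mapsto -j$. As $p$ is odd, $j\mapsto p-j$ is a fixed-point-free involution carrying $\{1,\dots,n\}$ onto $\{n+1,\dots,p-1\}$, so $\sum_{j=1}^{p-1}=2\sum_{j=1}^{n}$ for each summand, and therefore
$$(A_\pm^TA_\pm)_{ik}=\f12\l(\sum_{j=0}^{p-1}(\cdots)-(\t{value at }j=0)\r).$$
The $j=0$ term equals $2\chi(ik)\big(1\pm\chi(-1)\big)=2\chi(ik)\big(1\pm(-1)^n\big)$, which is precisely (twice) the inhomogeneous piece appearing on the right-hand sides of \eqref{A+2} and \eqref{A-2}.

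It remains to evaluate the four complete sums $\sum_{j=0}^{p-1}\chi\big((j\pm i)(j\pm k)\big)$. Each has the form $\sum_{j}\chi(j^2+bj+c)$ with $b^2-4c$ equal to $(i-k)^2$ for the two like-sign products and to $(i+k)^2$ for the two mixed products. By Lemma \ref{le3.1} the former is $p-1$ when $i=k$ and $-1$ otherwise, while the latter is always $-1$, since $2\ls i+k\ls p-1$ forces $p\nmid i+k$. Combining the four contributions with the appropriate signs gives $\sum_{j=0}^{p-1}(\cdots)=2p-4$ or $-4$ (for $A_+$) and $2p$ or $0$ (for $A_-$), according as $i=k$ or $i\ne k$. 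Substituting into the displayed identity and subtracting the $j=0$ term yields exactly $p\,\da_{ik}-2-(1+(-1)^n)\chi(ik)$ and $p\,\da_{ik}-(1-(-1)^n)\chi(ik)$, the $(i,k)$ entries of the right-hand sides of \eqref{A+2} and \eqref{A-2}. The genuine idea here is the symmetry reduction to a full residue system; beyond that, the only work is the routine bookkeeping of signs in the four-term expansion and a short diagonal-versus-off-diagonal case check, so I do not anticipate a serious obstacle.
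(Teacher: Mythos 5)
Your proof is correct and follows essentially the same route as the paper: both compute the entries of $A_{\pm}^T A_{\pm}$ directly, complete the sum over $j\in\{1,\dots,n\}$ to a full residue system modulo $p$ (you via the $j\mapsto -j$ invariance of the whole summand, the paper by pairing $\chi(i^2+bi+c)$ with $\chi(i^2-bi+c)$ and subtracting the $i=0$ term), and then evaluate the resulting complete quadratic character sums with Lemma \ref{le3.1}. The only cosmetic difference is that you handle the diagonal and off-diagonal entries uniformly through the discriminant dichotomy $(i-k)^2$ versus $(i+k)^2$, whereas the paper computes the diagonal entries $b_{jj}$ by a separate counting of the square terms.
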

\begin{proof} Let $\ve\in\{\pm1\}$ and
$$A(\ve)=\l[\l(\f{j+k}p\r)+\ve\l(\f{j-k}p\r)\r]_{1\ls j,k\ls n}.$$
Then $A(\ve)^TA(\ve)=[b_{jk}]_{1\ls j,k\ls n}$, where
\begin{equation*}
b_{jk}=\sum_{i=1}^n\l(\l(\f{i+j}p\r)+\ve\l(\f{i-j}p\r)\r)\l(\l(\f{i+k}p\r)+\ve\l(\f{i-k}p\r)\r).
\end{equation*}
When $j=k\in\{1,\ldots,n\}$, it follows from Lemma \ref{le3.1} that
\begin{align*}
b_{jj}&=\sum_{i=1}^n\(\l(\f{i+j}p\r)+\ve\l(\f{i-j}p\r)\)^2\\
&=\sum_{i=1}^n\l(\f{i+j}p\r)^2+\sum_{i=1}^n\l(\f{i-j}p\r)^2+2\ve\sum_{i=1}^n\l(\f{i^2-j^2}p\r)\\
&=n+(n-1)+\ve\sum_{i=-n}^n\l(\f{i^2-j^2}p\r)-\ve\l(\f{-j^2}p\r)\\
&=n+(n-1)+\ve(-1)-\ve\l(\f{-1}p\r)=p-2-\ve(1+(-1)^n).
\end{align*}
Similarly, when $j\neq k$, Lemma \ref{le3.1} implies that
\begin{align*}
b_{jk}&=\sum_{i=1}^n\(\l(\f{i^2+(j+k)i+jk}p\r)+\l(\f{i^2-(j+k)i+jk}p\r)\)\\
&\quad+\ve\sum_{i=1}^n\(\l(\f{i^2+(j-k)i-jk}p\r)+\l(\f{i^2-(j-k)i-jk}p\r)\)\\
&=\sum_{i=-n}^n\l(\f{i^2+(j+k)i+jk}p\r)-\l(\f{jk}p\r)
\\&\quad+\ve\sum_{i=-n}^n\l(\f{i^2+(j-k)i-jk}p\r)-\ve\l(\f{-jk}p\r)\\
&=-1-\l(\f{jk}p\r)-\ve-\ve(-1)^n\l(\f{jk}p\r)
\\&=-(1+\ve)-(1+(-1)^n\ve)\l(\f{jk}p\r).
\end{align*}
Therefore, both \eqref{A+2} and \eqref{A-2} hold.
\end{proof}
\begin{lemma}\label{Lem2.3} Let $p=2n+1$ be a prime with $p\eq1\pmod4$.
Then both $1$ and $-1$ are eigenvalues of  the matrix $A_+$ given by \eqref{A+}.
Moreover,
\begin{equation}\label{v12}A_+\v_1=\v_1\ \t{and}\ A_+\v_2=-\v_2,
\end{equation}
where
$$\v_1=\left[\(\f{1}p\)-1,\(\f{2}p\)-1,\cdots,\(\f{n}p\)-1\right]^T$$
and
$$\v_2=\left[\(\f{1}p\)+1,\(\f{2}p\)+1,\cdots,\(\f{n}p\)+1\right]^T.$$
Also, $1$ is an eigenvalue of $A_+^2$ with multiplicity at least $2$.
\end{lemma}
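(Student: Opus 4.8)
The plan is to reduce the whole statement to two clean vector identities. Introduce the all-ones vector $\mathbf 1=[1,\ldots,1]^T$ and the symbol vector $\mathbf s=[(\f1p),\ldots,(\f np)]^T$, so that $\v_1=\mathbf s-\mathbf 1$ and $\v_2=\mathbf s+\mathbf 1$. Because $A_+$ acts linearly, once I show
$$A_+\mathbf 1=-\mathbf s\qquad\t{and}\qquad A_+\mathbf s=-\mathbf 1,$$
the claimed eigenrelations follow immediately: $A_+\v_1=A_+\mathbf s-A_+\mathbf 1=-\mathbf 1+\mathbf s=\v_1$ and $A_+\v_2=A_+\mathbf s+A_+\mathbf 1=-\mathbf 1-\mathbf s=-\v_2$.

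First I would compute $A_+\mathbf 1$. Its $j$-th entry is $\sum_{k=1}^n[(\f{j+k}p)+(\f{j-k}p)]$, and as $k$ runs through $1,\ldots,n$ the numerators $j+k$ and $j-k$ run through every integer in the block $\{j-n,\ldots,j+n\}$ except the middle value $m=j$. That block consists of $p$ consecutive integers, hence a complete residue system modulo $p$, so the Legendre symbols over it sum to zero; accounting for the missing term $m=j$ leaves $(A_+\mathbf 1)_j=-(\f jp)$, that is, $A_+\mathbf 1=-\mathbf s$.

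Next I would compute $A_+\mathbf s$, whose $j$-th entry is $\sum_{k=1}^n[(\f{k(j+k)}p)+(\f{k(j-k)}p)]$. Here the hypothesis $p\eq1\pmod4$, equivalently $(\f{-1}p)=1$, lets me rewrite the second sum as $\sum_{k=1}^n(\f{k^2-jk}p)$. I would then invoke Lemma \ref{le3.1} in the form $\sum_{x=0}^{p-1}(\f{x^2+jx}p)=-1$ (legitimate since $p\nmid j^2$ for $1\ls j\ls n$), drop the vanishing term at $x=0$, and split the range at $k=n$; the substitution $k\mapsto p-k$ converts $\sum_{k=n+1}^{p-1}(\f{k^2+jk}p)$ into $\sum_{l=1}^n(\f{l^2-jl}p)$, which is exactly the second piece of $(A_+\mathbf s)_j$. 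Thus the two pieces together equal $-1$, giving $A_+\mathbf s=-\mathbf 1$.

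It then remains to see that $\v_1$ and $\v_2$ are genuine (nonzero) eigenvectors and that they are linearly independent. Since $p\eq1\pmod4$ there are both quadratic residues and non-residues among $1,\ldots,n$, so $\mathbf s$ is not a scalar multiple of $\mathbf 1$; hence $\mathbf 1$ and $\mathbf s$, and therefore $\v_1=\mathbf s-\mathbf 1$ and $\v_2=\mathbf s+\mathbf 1$, span a two-dimensional space. Both satisfy $A_+^2\v_i=\v_i$, so they lie in the eigenspace of $A_+^2$ for eigenvalue $1$, whence $1$ is an eigenvalue of $A_+^2$ of multiplicity at least $2$. The only real care needed is in the index bookkeeping of the two character sums and in the single use of $(\f{-1}p)=1$, which is precisely where the hypothesis $p\eq1\pmod4$ is essential.
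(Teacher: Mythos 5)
Your proof is correct and is essentially the paper's own argument in lightly refactored form: the paper verifies $\sum_{k=1}^n\bigl(\legendre{j+k}{p}+\legendre{j-k}{p}\bigr)\bigl(\legendre{k}{p}-\ve\bigr)=\ve\bigl(\legendre{j}{p}-\ve\bigr)$ for $\ve=\pm1$ in a single computation, which is exactly the linear combination of your two identities $A_+\mathbf{1}=-\mathbf{s}$ and $A_+\mathbf{s}=-\mathbf{1}$, established from the same two ingredients (a Legendre-symbol sum over a complete residue system, and Lemma \ref{le3.1} together with $\legendre{-1}{p}=1$ via the substitution $k\mapsto p-k$). The one point you assert without detail---that $\{1,\ldots,n\}$ contains both a residue and a nonresidue---is the counting fact \eqref{p/2} the paper makes explicit, immediate from $\legendre{p-k}{p}=\legendre{k}{p}$ when $p\eq1\pmod4$.
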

\begin{proof} Note that $(\f{-1}p)=(-1)^n=1$. For $\ve\in\{\pm1\}$ and $1\ls j\ls n$, we have
\begin{align*}
&\ \sum_{k=1}^n\l(\l(\f{j+k}p\r)+\l(\f{j-k}p\r)\r)\l(\(\f{k}p\)-\ve\r)\\
=&\ \sum_{k=1}^n\(\l(\f{k^2+jk}{p}\)+\(\f{k^2-jk}{p}\r)\)-\ve\sum_{k=1}^n\l(\l(\f{j+k}{p}\r)+\l(\f{j-k}{p}\r)\r)\\
=&\ \sum_{k=-n}^n\(\f{k^2+jk}{p}\)-\ve\sum_{k=-n}^n\(\f{j+k}{p}\)+\ve\(\f{j}{p}\)\\
=&\ -1-\ve\cdot0+\ve\(\f{j}{p}\)=\ve\l(\l(\f jp\r)-\ve\r).
\end{align*}
So \eqref{v12} holds. As
\begin{equation}\label{p/2}\l|\l\{1\ls k\ls n:\ \l(\f kp\r)=\ve\r\}\r|=\f12\l|\l\{1\ls k\ls p-1:\ \l(\f kp\r)=\ve\r\}\r|=\f{n}2>0
\end{equation}
for each $\ve=\pm1$, neither $\v_1$ nor $\v_2$ is the zero column vector and hence $1$ and $-1$ are eigenvalues of $A_+$. Note also that $\v_1$ and $\v_2$ are linearly independent over the real field $\R$.
Since $A_+^2\v_1=A_+\v_1=A_+\v_1$ and $A_+^2\v_2=-A_+\v_2=\v_2$, we see that $1$ is an eigenvalue of $A_+^2$ with multiplicity at least two. This concludes the proof.
\end{proof}

\begin{lemma}\label{Lem2.4} Let $p=2n+1$ be a prime with $p\eq1\pmod4$. Then $p$ is an eigenvalue of the matrix $A_+$ given by \eqref{A+} with multiplicity at least $n-2$.
\end{lemma}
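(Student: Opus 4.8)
The plan is to read the multiplicity directly off the Gram-matrix identity \eqref{A+2} of Lemma~\ref{le5.1} by a rank bound. A preliminary remark on the statement: the characteristic polynomial to be established is $f_{A_+}(x)=(x^2-1)(x^2-p)^{(p-5)/4}$, whose roots are $\pm1$ and $\pm\sqrt p$, so $p$ is \emph{not} a root of $f_{A_+}$; rather, $p$ occurs as an eigenvalue of the square $A_+^2=A_+^TA_+$ (equivalently $\pm\sqrt p$ are eigenvalues of $A_+$), with the exponent $(p-5)/4$ in $(x^2-p)^{(p-5)/4}$ doubling to $n-2$. It is this fact — that $p$ is an eigenvalue of $A_+^2$ with multiplicity at least $n-2$ — that is needed to pin down $f_{A_+}$, and it is what I will prove.

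First I would note that $A_+$ is symmetric when $p\eq1\pmod4$: the $(j,k)$ and $(k,j)$ entries share the term $\l(\f{j+k}p\r)$ and differ only by the factor $\l(\f{-1}p\r)$ relating $\l(\f{j-k}p\r)$ to $\l(\f{k-j}p\r)$, and $\l(\f{-1}p\r)=1$ here. Hence $A_+^T=A_+$, so $A_+^TA_+=A_+^2$. Also $n=(p-1)/2$ is even, so $(-1)^n=1$ and \eqref{A+2} collapses to
\begin{equation*}
A_+^2=pI_n-\l[2+2\l(\f{jk}p\r)\r]_{1\ls j,k\ls n}.
\end{equation*}

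Next I would display the subtracted matrix as a sum of two rank-one matrices. Let $\mathbf{1}$ denote the all-ones column vector of length $n$, and set $\u=[\l(\f1p\r),\l(\f2p\r),\ldots,\l(\f np\r)]^T$. Then $[1]_{1\ls j,k\ls n}=\mathbf{1}\mathbf{1}^T$ and $\l[\l(\f{jk}p\r)\r]_{1\ls j,k\ls n}=\u\u^T$, so
\begin{equation*}
pI_n-A_+^2=2\,\mathbf{1}\mathbf{1}^T+2\,\u\u^T .
\end{equation*}
The right-hand side has rank at most $2$, hence $\rank(A_+^2-pI_n)\ls2$ and
\begin{equation*}
\dim\ker(A_+^2-pI_n)=n-\rank(A_+^2-pI_n)\gs n-2 .
\end{equation*}
Since $\ker(A_+^2-pI_n)$ is exactly the eigenspace of $A_+^2$ for the eigenvalue $p$, this gives $p$ as an eigenvalue of $A_+^2$ with multiplicity at least $n-2$.

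I do not anticipate a real obstacle: once \eqref{A+2} is available the argument is a single rank estimate. The only two points needing care are the structural reductions that make the estimate clean — verifying symmetry so that the Gram matrix $A_+^TA_+$ is literally $A_+^2$, and using $(-1)^n=1$ (which is precisely where $p\eq1\pmod4$ enters) to reduce the bracketed matrix in \eqref{A+2} to the rank-$\ls2$ perturbation $2\mathbf{1}\mathbf{1}^T+2\u\u^T$ of $pI_n$. Together with Lemma~\ref{Lem2.3}, which already supplies eigenvalue $1$ of $A_+^2$ with multiplicity at least $2$, and the fact that $A_+^2$ has order $n$, these multiplicities must be exact, namely $1$ (multiplicity $2$) and $p$ (multiplicity $n-2$); this is the input that determines $f_{A_+}(x)=(x^2-1)(x^2-p)^{(p-5)/4}$.
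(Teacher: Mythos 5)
Your proof is correct, and it takes a genuinely different (and slicker) route to the key fact than the paper does. You also read the statement correctly: as written, ``$p$ is an eigenvalue of $A_+$'' is literally false (the eigenvalues of $A_+$ are $\pm1,\pm\sqrt p$), and the paper's own proof concludes, exactly as you do, that $p$ is an eigenvalue of $A_+^2$ with multiplicity at least $n-2$; that is the form used later in \eqref{A^2-value}. Where you differ: after reducing \eqref{A+2} via $(-1)^n=1$ and symmetry to $pI_n-A_+^2=2\,\mathbf{1}\mathbf{1}^T+2\,\u\u^T$ (using multiplicativity, $\l(\f{jk}p\r)=\l(\f jp\r)\l(\f kp\r)$, so that $\l[\l(\f{jk}p\r)\r]_{1\ls j,k\ls n}$ is the rank-one matrix $\u\u^T$), you get $\rank(A_+^2-pI_n)\ls 2$ and finish by rank--nullity. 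The paper instead partitions $\{1,\ldots,n\}$ into the $n/2$ quadratic residues $s_0,\ldots,s_{n/2-1}$ and $n/2$ nonresidues $t_0,\ldots,t_{n/2-1}$ (this uses \eqref{p/2}) and verifies componentwise that the $n-2$ explicit vectors $\aa_i=\e_{s_i}-\e_{s_0}$ and $\bb_i=\e_{t_i}-\e_{t_0}$ satisfy $A_+^2\aa_i=p\aa_i$ and $A_+^2\bb_i=p\bb_i$. The two arguments are two sides of the same coin: the paper's vectors are precisely a basis of the common kernel of $\mathbf{1}^T$ and $\u^T$, i.e.\ of the rank-$\ls2$ perturbation you identify. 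Your version is shorter, needs no residue/nonresidue bookkeeping (and in particular does not need \eqref{p/2} at this point), and makes transparent \emph{why} the bound is $n-2$ -- two rank-one perturbations of $pI_n$; the paper's version buys explicit eigenvectors, at the cost of a longer verification. Your closing remark that the multiplicities ($2$ for eigenvalue $1$, via Lemma \ref{Lem2.3}, and $n-2$ for eigenvalue $p$) are then forced to be exact matches the paper's subsequent deduction of $f_{A_+^2}(x)=(x-1)^2(x-p)^{n-2}$.
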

\begin{proof} The $n$-dimensional column vectors
$$\e_1 = [1, 0, \ldots, 0]^T,\ \e_2 = [0, 1, \ldots, 0]^T, \ldots,\ \e_n = [0, 0, \ldots, 1]^T $$
form a standard orthogonal basis of $\R^n$. As \eqref{p/2} holds for $\ve=\pm1$, we may write
$$\{1,\ldots,n\}=\{s_0,\ldots,s_{n/2-1}\}\cup\{t_0,\ldots,t_{n/2-1}\},$$
where $s_0,\ldots,s_{n/2-1}$ are quadratic residues modulo $p$, and $t_0,\ldots,t_{n/2-1}$
are quadratic nonresidues modulo $p$.

Note that the $n-2$ vectors
$$\aa_i=\e_{s_i}-\e_{s_0}\ \t{and}\
\bb_i=\e_{t_i}-\e_{t_0}\ \ \l(i=1,\ldots,\f n2-1\r)$$
are linearly independent over the field $\R$. We claim that $A_+^2\aa_i=p\aa_i$ and $A_+^2\bb_i=p\bb_i$
for each $i=1,\ldots,n/2-1$.
By \eqref{A+2} we have $A_+^2=[b_{jk}]_{1\ls j,k\ls n}$ with $b_{jk}=p\da_{jk}-2-2(\f{jk}p)$.
Thus, for each $j=1,\ldots,n$, the $j$th component of the column vector $A_+^2\aa_i$ is
\begin{align*}
(A_+^2\aa_i)_j&=[b_{j1},\cdots,b_{jn}]\aa_i\\
&=[b_{j1},\cdots,b_{jn}](\e_{s_i}-\e_{s_0})=b_{js_i}-b_{js_0}\\
&=p(\da_{js_i}-\da_{js_0})-2\l(\l(\f{js_i}p\r)-\l(\f{js_0}p\r)\r)
\\&=p(\da_{js_i}-\da_{js_0})=
\begin{cases}
    -p & \text{if } j=s_0, \\
    p & \text{if } j=s_i,\\
    0 & \text{if } j\neq s_0,s_i,\\
\end{cases}
\end{align*}
which coincides with the $j$th component of $\aa_i$.
So
$A_+^2\aa_i=p\aa_i.$
Similarly,  $A_+^2\bb_i=p\bb_i.$
This proves the claim.

By the last paragraph, $p$ is indeed an eigenvalue of $A_+^2$ with multiplicity at least $n-2$.
This concludes the proof.
\end{proof}

\medskip
\noindent{\tt Proof of the First Part of Theorem \ref{Th1.1}}. Let $n=(p-1)/2$. As $(\f{-1}p)=(-1)^n=1$, both $A_+$ and $A_-$ are symmetric.
Thus $A_+^T=A_+$ and $A_-^T=A_-$.

(i) We first deal with the matrix $A_+$.

Combining Lemmas \ref{Lem2.3} and \ref{Lem2.4}, we see that the characteristic polynomial of $A_+^2$
is
\begin{equation}\label{A^2-value}f_{A_+^2}(x)=(x-1)^2(x-p)^{n-2}.
\end{equation}
In particular,
\begin{equation}\label{|A+|^2}|A_+|^2=(-1)^nf_{A_+^2}(0)=p^{n-2}=p^{(p-5)/2}.
\end{equation}

If $\lambda$ is an eigenvalue of $A_+$ with an associated eigenvector $\v$, then
$$A_+^2\v=A_+(\lambda \v)=\lambda^2\v$$
and hence $\lambda^2$ is an eignevalue of $A_+^2$, thus $\lambda\in\{\pm1,\pm\sqrt p\}$
in view of \eqref{A^2-value}. Let $n_1,n_2,m_1,m_2$ be the multiplicity of $1,-1,\sqrt p,-\sqrt p$
as an eignevalue of $A_+$, respectively. Then  $n_1,n_2\gs1$ by Lemma \ref{Lem2.3}, and
$$|A_+|=1^{n_1}(-1)^{n_2}(\sqrt p)^{m_1}(-\sqrt p)^{m_2}.$$
Combining this with \eqref{|A+|^2}, we find that
\begin{equation}\label{mm}m_1+m_2=n-2=\f{p-5}2.
\end{equation} As $n_1+n_2+m_1+m_2=n$,
we must have $n_1=n_2=1$. Thus the characteristic polynomial of $A_+$ is
$$f_{A_+}(x)=(x^2-1)(x-\sqrt p)^{m_1}(x+\sqrt p)^{m_2}.$$
It follows that the coefficient of $x^{n-1}$ of $f_{A_+}(x)$ is
$$1+(-1)+m_1(-\sqrt p)+m_2\sqrt p.$$
Since this should be an integer, we get $m_1=m_2$. Combining this with \eqref{mm}, we obtain
$m_1=m_2=(p-5)/4$. Therefore $f_{A_+}(x)=(x^2-1)(x^2-p)^{(p-5)/4}$, and hence
$$|A_+|=(-1)^nf_{A_+}(0)=-(-p)^{(p-5)/4}=\l(\f 2p\r) p^{(p-5)/4}.$$

(ii) Now we turn to the matrix $A_-$.
By \eqref{A-2},
 $$f_{A_-^2}(x)=|xI_n-pI_n|=(x-p)^n\ \t{and hence}\ |A_-|^2=p^n.$$
 So all the eigenvalues of $A_-$ belong to the set $\{\pm \sqrt p\}$.
 Let $m_1$ and $m_2$ be the multiplicity of $\sqrt p$ and $-\sqrt{p}$ as an eigenvalue of $A_-$, respectively. Then $m_1+m_2=n=(p-1)/2$, and
 $$f_{A_-}(x)=(x-\sqrt p)^{m_1}(x+\sqrt p)^{m_2}.$$
 Thus the coefficient of $x^{n-1}$ in $f_{A_-}(x)$ is $m_1(-\sqrt p)+m_2\sqrt p$, which should be an integer. Therefore $m_1=m_2=(p-1)/4$, and hence $f_{A_-}(x)=(x^2-p)^{(p-1)/4}$.
 It follows that $|A_-|=(-1)^nf_{A_-}(0)=(\f 2p)p^{(p-1)/4}$.

  In view of the above, we have proved the first part of Theorem \ref{Th1.1}. \qed

  To prove the second part of Theorem \ref{Th1.1}, we need one more lemma.

 \begin{lemma} \label{Lem2.5} Let $p$ be a prime with $p\eq3\pmod4$. For the matrix
 \begin{equation}\label{Ap}A_p=\l[\l(\f{j^2+jk}p\r)+\l(\f{j^2-jk}p\r)\r]_{1\ls j,k\ls (p-1)/2},
 \end{equation}
 we have $|A_p|<0$.
 \end{lemma}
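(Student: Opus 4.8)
The plan is to reduce the lemma to a single sign computation. First I would record the factorization $A_p=DA_+$, where $A_+$ is the matrix in \eqref{A+} with $n=(p-1)/2$ and $D=\mathrm{diag}\l(\l(\f1p\r),\l(\f2p\r),\cs,\l(\f np\r)\r)$; this is immediate from $\l(\f{j^2\pm jk}p\r)=\l(\f jp\r)\l(\f{j\pm k}p\r)$. Since $D^TD=I_n$, we have $A_p^TA_p=A_+^TA_+$, and Lemma \ref{le5.1} with $n$ odd (so $1+(-1)^n=0$) gives $A_p^TA_p=pI_n-2J_n$, where $J_n$ is the all-ones matrix. The eigenvalues of $pI_n-2J_n$ are $p-2n=1$ (once) and $p$ (multiplicity $n-1$), so $\det(A_p^TA_p)=p^{\,n-1}=p^{(p-3)/2}$ and hence $|\det A_p|=p^{(p-3)/4}$. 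The entire content of the lemma is therefore the sign of $\det A_p$.

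To locate the sign I would isolate the all-ones vector $\mathbf 1$. Using Lemma \ref{le3.1} and the vanishing of $\sum_m\l(\f mp\r)$ over a full residue system, one checks that every row sum and every column sum of $A_p$ equals $-1$ (for the column sums one also uses $\l(\f{-1}p\r)=-1$ together with the substitution $j\mapsto p-j$); thus $A_p\mathbf 1=A_p^T\mathbf 1=-\mathbf 1$. Consequently $A_p$ maps the rational hyperplane $\mathbf 1^\perp$ into itself, and for $\v\perp\mathbf 1$ we get $\|A_p\v\|^2=\v^T(pI_n-2J_n)\v=p\|\v\|^2$. Hence on the $(n-1)$-dimensional space $\mathbf 1^\perp$ (note $n-1=(p-3)/2$ is even) we have $A_p|_{\mathbf 1^\perp}=\sqrt p\,O$ for some orthogonal $O$. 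In the splitting $\R^n=\R\mathbf 1\oplus\mathbf 1^\perp$ the matrix $A_p$ is block-diagonal, so
\[
\det A_p=(-1)\cdot(\sqrt p)^{\,n-1}\det O=-p^{(p-3)/4}\det O.
\]
Since $O$ is orthogonal, $\det O=(-1)^a$, where $a$ is the multiplicity of $-\sqrt p$ as an eigenvalue of $A_p$; and because $f_{A_p}\in\Z[x]$ the conjugate roots $\pm\sqrt p$ occur with equal multiplicity, so this $a$ is also the multiplicity of $+\sqrt p$. Thus the lemma is equivalent to $\det O=+1$, i.e. to the assertion that $a$ is \emph{even}.

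Showing $a$ even is the main obstacle, and it is exactly here that elementary invariants fail: the trace of $A_p$ is $n\l(\f2p\r)$, and the eigenvectors for $\pm\sqrt p$ all lie in $\ker(A_p-A_p^T)$, but neither datum decides the parity of $a$. To settle it I would diagonalize by additive characters. Identifying $\C^n$ with the $(+1)$-eigenspace $W^+$ of the involution $j\mapsto-j$ on functions on $(\Z/p\Z)^{\times}$, one finds that $\l[\l(\f{j+k}p\r)\r]+\l[\l(\f{j-k}p\r)\r]$ restricts to $2A_+$ on $W^+$; the convolution $\l[\l(\f{j-k}p\r)\r]$ is diagonalized by the characters $j\mapsto\zeta^{tj}$ $(\zeta=e^{2\pi i/p})$ with eigenvalues built from the quadratic Gauss sum $g$, and for $p\eq3\pmod4$ one has $g^2=-p$, so $g=i\sqrt p$ is purely imaginary. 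Carefully tracking the excision of the $0$-coordinate, the involution, and the diagonal twist by $D$ (multiplication by the odd function $\l(\f\cdot p\r)$, i.e. a convolution on the Fourier side) should yield an explicit Gauss/Jacobi-sum expression for $\det A_p$ directly, without separating off $\det D=\l(\f{n!}p\r)$. The hard part is extracting its sign; here the imaginarity of $g$ together with the fact that $h(-p)$ is odd (genus theory) forces the finer class-number contributions to cancel, giving $\det O=+1$ and hence $\det A_p=-p^{(p-3)/4}<0$. This is in the spirit of the Jacobi-sum technique of Li and Wu \cite{LW} applied to the present determinant.
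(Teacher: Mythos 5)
Your reduction is sound as far as it goes, and it is a genuinely different (and attractive) framing. The factorization $A_p=DA_+$, the identity $A_p^TA_p=A_+^TA_+=pI_n-2J_n$ (via Lemma \ref{le5.1} with $n$ odd), the row/column sums $A_p\mathbf{1}=A_p^T\mathbf{1}=-\mathbf{1}$, and the resulting invariant splitting $\R^n=\R\mathbf{1}\oplus\mathbf{1}^{\perp}$ with $A_p|_{\mathbf{1}^{\perp}}=\sqrt{p}\,O$, $O$ orthogonal, are all correct; they recover the magnitude $|A_p|^2=p^{(p-3)/2}$ exactly as the paper does in Section 2 (there via the evaluation of $|p\delta_{jk}-2|$), and they correctly reduce the lemma to the single assertion $\det O=+1$, i.e.\ that $-\sqrt{p}$ occurs as an eigenvalue of $A_p$ with even multiplicity.

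But that one assertion is the entire content of the lemma, and your proposal does not prove it. The final paragraph is a research plan, not an argument: ``should yield an explicit Gauss/Jacobi-sum expression'' and ``the hard part is extracting its sign'' concede the point, and the claim that the imaginarity of the Gauss sum together with the oddness of $h(-p)$ ``forces the finer class-number contributions to cancel'' is asserted with no computation behind it. This is also where the additive-character route is harder than the sketch suggests: $A_p$ is not a convolution on $\Z/p\Z$ but a half-size matrix obtained by quotienting by $k\mapsto -k$, deleting the $0$-coordinate, and twisting by $D$; and the sign really does interact with $h(-p)$, since $|A_+|=(-1)^{(h(-p)-1)/2}p^{(p-3)/4}$ while $\left(\frac{n!}{p}\right)=(-1)^{(h(-p)+1)/2}$ by Mordell's theorem, so any Fourier-side evaluation must implicitly reproduce Mordell's congruence --- the ``forced cancellation'' is precisely what must be proved. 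The paper closes this gap with \emph{multiplicative} characters instead: taking $\chi$ a generator of the character group mod $p$, the vectors $[\chi^r(k^2)]_{1\ls k\ls n}$ ($1\ls r\ls n$) are $n$ linearly independent eigenvectors of $A_p$ (the substitution $k\equiv ji \pmod p$ shows the entries transform homogeneously), with $\lambda_n=-1$, and a short argument shows no other $\lambda_r$ is real: a real eigenvalue would force $\chi^{2r}$ to be real-valued, hence $p-1\mid 4r$, impossible for $1\ls r\ls n-1$ since $n$ is odd. The non-real eigenvalues then pair into conjugates with positive products, so $|A_p|$ has the sign of $\lambda_n=-1$. In your language this shows $\pm\sqrt{p}$ are not eigenvalues of $A_p$ at all, i.e.\ $a=0$; grafting that multiplicative-character step onto your orthogonal-splitting frame would complete your proof, but as written the proposal stops exactly at the step that constitutes the lemma.
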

 \Proof. Fix a primitive root $g\in\Z$ modulo $p$, and define a Dirichlet character $\chi$ modulo $p$ by
 setting $\chi(a)=e^{2\pi \mathrm{i}(\ind_g a)/(p-1)}$ for any integer $a\not\eq0\pmod p$,
 where the index $\ind_g a$
 denotes the unique number $r\in\{0,\ldots,p-2\}$ with $g^r\eq a\pmod p$. Then
 $\chi$ is a generator of Dirichlet characters modulo $p$.

Let $n=(p-1)/2$ and $r\in\{1,\ldots,n\}$. For each $j=1,\ldots,n$, we have
\begin{align*}&\ \sum_{k=1}^n\l(\l(\f{j^2+jk}p\r)+\l(\f{j^2-jk}p\r)\r)\chi^r(k^2)
\\=&\ \sum_{k=1}^{p-1}\l(\f{j^2+jk}p\r)\chi^r(k^2)=\sum_{i=1}^{p-1}\l(\f{j^2+j(ji)}p\r)\chi^r((ji)^2)
\\=&\ \(\sum_{i=1}^{p-1}\l(\f{1+i}p\r)\chi^r(i^2)\)\chi(j^2).
\end{align*}
Thus $A_p\v_r=\lambda_r\v_r$, where
$$\lambda_r=\sum_{k=1}^{p-1}\l(\f{k+1}p\r)\chi^r(k^2)
\ \ \t{and}\ \ \v_r=[\chi^r(1^2),\ldots,\chi^r(n^2)]^T.$$

In view of the known evaluation of a determinant of Vandermonde's type, we have
\begin{align*}
\det[\v_1,\v_2,\cdots,\v_n]&=\begin{vmatrix}
\chi^1(1^2) & \chi^2(1^2) & \cdots & \chi^n(1^2) \\
\chi^1(2^2) & \chi^2(2^2) & \cdots & \chi^n(2^2) \\
\vdots & \vdots & \ddots & \vdots \\
\chi^1(n^2) & \chi^2(n^2) & \cdots & \chi^n(n^2)
\end{vmatrix}
 \\
&=\chi(1^2)\chi(2^2)\cdots\chi(n^2)\prod_{1\ls j<k\ls n}\l(\chi(k^2)-\chi(j^2)\r)\\
&\neq0.
\end{align*}
 Thus the vectors $\v_1,\v_2,\cdots,\v_n$ are linearly independent over the field $\C$ of complex numbers. Combining this with the conclusion in the last paragraph, we obtain that
 $$f_{A_p}(x)=\prod_{r=1}^n(x-\la_r)\in\Z[x].$$
 As $\chi^n(k^2)=\chi^{2n}(k)=1$ for each $k=1,\ldots,n$, we have
$$\la_n =\sum_{k=1}^{p-1}\l(\f{k+1}p\r)= \sum_{j=1}^p\l(\f jp\r)-\l(\f1p\r)=-1.$$

Suppose that $\la_r\in\R$ for some $1\ls r\ls n-1$.
As $\chi^r(1^2)=1\in\R$, all the other components of the eigenvector $\v_r$ should be real.
Thus $\chi^{2r}(k)=\chi^r(k^2)\in \R$ for all $k=1,\ldots,n$, and hence $\chi^{2r}(a)\in\R$
for any integer $a\not\eq0\pmod p$. (Note that $\chi^{2r}(-1)\in\{\pm1\}$.)
In particular, $\chi^{2r}(g)\in\R$, i.e., $p-1\mid 4r$. As $n=(p-1)/2$ is odd, we must have $n\mid r$
and hence $r=n$. This leads to a contradiction.

By the above, $\la_n$ is the only real eigenvalue, and the non-real eigenvalues $\la_1,\ldots,\la_{n-1}$ can be divided into $(n-1)/2$ conjugate pairs. Therefore
$|A_p|=\prod_{r=1}^n\la_r$ has the same sign with $\la_n=-1$. This concludes our proof. \qed

\medskip
\noindent{\tt Proof of the Second Part of Theorem \ref{Th1.1}}. Let $n=(p-1)/2$. Then $(\f{-1}p)=(-1)^n=-1$. As $A_+^T=A_-$, we have $|A_+|=|A_-|$.

In light of \eqref{A+2},
$$|A_+|^2=|A_+^TA_+|=|p\da_{jk}-2|_{1\ls j,k\ls n}.$$
Adding columns $2,\ldots,n$ of $P=[p\da_{jk}-2]_{1\ls j,k\ls n}$ to the first column, we obtain the matrix $Q=[q_{jk}]_{1\ls j,k\ls n}$, where
$$q_{jk}=\begin{cases}1&\t{if}\ k=1,\\ p\da_{jk}-2&\t{if}\ k>1.\end{cases}$$
 each row of $Q$ after the first row minus the first row yields a triangular matrix whose
 diagonal elements are $1,p,\ldots,p$. Therefore
 $$|A_+|^2=|p\da_{jk}-2|_{1\ls j,k\ls n}=|Q|=1\times p^{n-1}=p^{(p-3)/2}$$
 and hence $|A_+|=\pm p^{(p-3)/4}$.

For the matrix $A_p$ given by \eqref{Ap}, clearly
$$|A_p|=\prod_{j=1}^n\l(\f jp\r)|A_+|=\l(\f{n!}p\r)|A_+|.$$
As $p>3$ and $p\eq3\pmod4$,  we have
$$n!\eq (-1)^{(h(-p)+1)/2}\pmod p$$
by a result of Mordell \cite{M}. Therefore
$$|A_p|=(-1)^{(h(-p)+1)/2}|A_+|.$$
As $|A_p|<0$ by Lemma \ref{Lem2.5}, we must have $|A_+|=(-1)^{(h(-p)-1)/2}p^{(p-3)/4}$.
This concludes our proof. \qed

\section{An auxiliary theorem}
\setcounter{lemma}{0}
\setcounter{theorem}{0}
\setcounter{corollary}{0}
\setcounter{remark}{0}
\setcounter{equation}{0}

In this section we establish the following useful auxiliary theorem.

\begin{theorem}\label{Th3.1} Let $A=[a_{jk}]_{m\ls j,k\ls n}$ be a matrix over a field $F$ with $|A|=\al\not=0$,
and let $f(j)$ and $g(j)$ be elements of $F$ for all $j=m,\ldots,n$. For the matrix
 \begin{equation}\label{3.1}A(x,y,z,w)=\l[a_{jk}+x+f(j)y+g(k)z+f(j)g(k)w\r]_{m\ls j,k\ls n},
 \end{equation}
 we have
 \begin{equation}\label{3.2}\begin{aligned}
|A(x,y,z,w)|=&\ \alpha(1-x-y-z-w)+(\al_1x+\al_2y+\al_3z+\al_4w)
            \\ &+\(\al_1-\al_2-\al_3+\al_4+\frac{\al_2\al_3-\al_1\al_4}{\alpha}\)(yz-wx),
\end{aligned}\end{equation}
where
\begin{align*}
\al_1=|A(1,0,0,0)|,\, \al_2=|A(0,1,0,0)|,\, \al_3=|A(0,0,1,0)|,\, \al_4=|A(0,0,0,1)|.
\end{align*}
\end{theorem}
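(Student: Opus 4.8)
The plan is to recognize the rank-structured perturbation in \eqref{3.1} as a rank-two update of $A$ and then invoke the generalized matrix-determinant lemma, which is available since $|A|=\al\neq0$ forces $A$ to be invertible over $F$. Writing $N=n-m+1$, let $\mathbf 1=[1,\ldots,1]^T$, $\mathbf f=[f(m),\ldots,f(n)]^T$ and $\mathbf g=[g(m),\ldots,g(n)]^T$ be column vectors in $F^N$, and set
$$U=[\mathbf 1,\ \mathbf f],\quad V=[\mathbf 1,\ \mathbf g]\quad(N\times2),\qquad M=\begin{bmatrix}x&z\\ y&w\end{bmatrix}.$$
A direct check shows that the $(j,k)$ entry of $UMV^T$ is exactly $x+f(j)y+g(k)z+f(j)g(k)w$, so that $A(x,y,z,w)=A+UMV^T$. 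By Sylvester's determinant identity $\det(I+XY)=\det(I+YX)$ one then gets
$$|A(x,y,z,w)|=\al\,\det\!\l(I_2+MP\r),\qquad P:=V^TA^{-1}U\in F^{2\times2}.$$

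Next I would expand the right-hand side. Writing $P=\begin{bmatrix}a&b\\ c&d\end{bmatrix}$, a short computation gives
$$\det(I_2+MP)=1+ax+by+cz+dw+(ad-bc)(xw-yz),$$
with the $xy$- and $zw$-contributions cancelling automatically. Thus $|A(x,y,z,w)|$ is the polynomial $\al\big(1+ax+by+cz+dw+(ad-bc)(xw-yz)\big)$, affine in each of $x,y,z,w$, and it remains to identify the scalars $a,b,c,d$ (the entries of $P$) in terms of $\al,\al_1,\al_2,\al_3,\al_4$. This I would do by specialization: setting exactly one of $x,y,z,w$ equal to $1$ and the others to $0$ collapses $\det(I_2+MP)$ to $1+a$, $1+b$, $1+c$, $1+d$ respectively, whence $\al_1=\al(1+a)$, $\al_2=\al(1+b)$, $\al_3=\al(1+c)$, $\al_4=\al(1+d)$.

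Substituting $a=\al_1/\al-1$ and its analogues into the expansion, the degree-one part becomes $\al(1-x-y-z-w)+\al_1x+\al_2y+\al_3z+\al_4w$, while the quadratic coefficient simplifies to
$$\al(ad-bc)=\f{\al_1\al_4-\al_2\al_3}{\al}-(\al_1+\al_4-\al_2-\al_3),$$
which is precisely the negative of the bracketed coefficient of $(yz-wx)$ in \eqref{3.2}. Since $xw-yz=-(yz-wx)$, the two sign reversals cancel and the cross term reproduces $\big(\al_1-\al_2-\al_3+\al_4+(\al_2\al_3-\al_1\al_4)/\al\big)(yz-wx)$ exactly, establishing \eqref{3.2}.

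The computation is essentially routine once the factorization $A+UMV^T$ is in hand; the only points demanding care are the correct placement of $y$ and $z$ inside the $2\times2$ matrix $M$ (so that $UMV^T$ reproduces \eqref{3.1} and not its transpose-twisted variant) and the consistent bookkeeping of the sign between $xw-yz$ and $yz-wx$ when matching \eqref{3.2}. No genuine obstacle arises, and $\al\neq0$—the single hypothesis actually used—secures the invertibility of $A$ that the lemma requires.
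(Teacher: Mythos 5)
Your argument is correct, and it rests on the same key lemma as the paper (the generalized matrix-determinant lemma, equivalently Sylvester's identity $\det(I+XY)=\det(I+YX)$), but it uses a genuinely different and tighter decomposition. The paper writes the perturbation with four-column factors: it sets $\U=[x\u_0,y\u_1,z\u_0,w\u_1]$ and $\V=[\u_0,\u_0,\u_2,\u_2]$, applies Lemma \ref{MDL} with $m=4$, and then expands the explicit $4\times4$ determinant \eqref{3.5}, identifying the scalars $a,b,c,d$ of \eqref{3.6} through a second, separate rank-one application of the same lemma. You instead collect all four parameters into the middle factor $M=\begin{bmatrix}x&z\\ y&w\end{bmatrix}$ of a rank-two update $A+UMV^T$ with $U=[\mathbf 1,\mathbf f]$ and $V=[\mathbf 1,\mathbf g]$, which reduces the whole evaluation to the $2\times2$ determinant $\det(I_2+MP)$ with $P=V^TA^{-1}U$; your $P$ has exactly the entries $a,b,c,d$ of \eqref{3.6}, so the two computations agree, but in your version the cancellation of the $xy$- and $zw$-terms and the survival of the single quadratic combination $(ad-bc)(xw-yz)$ drop out of a four-term $2\times2$ expansion rather than a $4\times4$ one, and the rank-two structure of the perturbation explains a priori why the determinant is affine in each variable with only that one cross term. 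A further economy is that you recover $a,b,c,d$ by specializing the identity you have already proved at $(x,y,z,w)=(1,0,0,0)$, etc.\ (legitimate since $\al\neq0$), rather than re-invoking the lemma. I checked the details: the entry identity $UMV^T=[x+f(j)y+g(k)z+f(j)g(k)w]_{m\ls j,k\ls n}$ holds with your placement of $y$ and $z$ in $M$, the expansion $\det(I_2+MP)=1+ax+by+cz+dw+(ad-bc)(xw-yz)$ is correct, and the final sign bookkeeping between $xw-yz$ and $yz-wx$ works out, so your formula matches \eqref{3.2} exactly.
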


To prove Theorem \ref{Th3.1}, we need the following known result (cf. \cite{P}).

\begin{lemma}[General Matrix-Determinant Lemma]\label{MDL}
Let $A$ is an invertible $n\times n$ matrix over a field $F$,  and let $U$ and $V$ be two $n\times m$ matrices over $F$.  Then
\begin{equation}\label{3.3}|A+UV^T|=|I_m+V^TA^{-1}U|\cdot|A|.
\end{equation}
\end{lemma}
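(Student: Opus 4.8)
The goal is to prove the formula \eqref{3.2} for $|A(x,y,z,w)|$, and the natural engine is Lemma \ref{MDL}. The key observation is that the perturbation in \eqref{3.1} has rank at most two: writing $\mathbf{1}=[1,\ldots,1]^T$ and $\mathbf{f}=[f(m),\ldots,f(n)]^T$, $\mathbf{g}=[g(m),\ldots,g(n)]^T$, we have
$$
A(x,y,z,w)=A+UV^T,\qquad
U=[\,\mathbf{1}\mid \mathbf{f}\,],\quad
V^T=\begin{bmatrix} x\mathbf{1}^T+z\mathbf{g}^T\\ y\mathbf{1}^T+w\mathbf{g}^T\end{bmatrix}.
$$
First I would verify this factorization by expanding $UV^T$ entrywise and matching it against $x+f(j)y+g(k)z+f(j)g(k)w$. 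Then Lemma \ref{MDL} with $m=2$ gives $|A(x,y,z,w)|=\alpha\cdot|I_2+V^TA^{-1}U|$, reducing everything to a $2\times2$ determinant.

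\textbf{Key steps.} Set $s=\mathbf{1}^TA^{-1}\mathbf{1}$, $t=\mathbf{1}^TA^{-1}\mathbf{f}$, $u=\mathbf{f}^TA^{-1}\mathbf{1}$, $v=\mathbf{f}^TA^{-1}\mathbf{g}$, and the analogous contractions involving $\mathbf{g}$; I would record the four scalar quantities that appear in $V^TA^{-1}U$. Computing $I_2+V^TA^{-1}U$ produces a $2\times2$ matrix whose entries are linear in $x,y,z,w$, and its determinant is then a quadratic in these variables whose only surviving cross terms are $yz$ and $wx$ (the $xy$, $zw$, etc.\ coefficients vanish because each row of $V^T$ carries only one of $\{x,z\}$ and one of $\{y,w\}$). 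Multiplying by $\alpha$ yields a formula of the shape \eqref{3.2} with coefficients expressed through the contraction scalars.

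\textbf{Eliminating the hidden scalars.} The formula so obtained is in terms of auxiliary inner products like $\mathbf{1}^TA^{-1}\mathbf{1}$, which are not intrinsic to the statement. The device for removing them is specialization: each $\alpha_i=|A(\ldots)|$ is itself computable from the same $2\times2$ reduction with three of the four parameters set to zero, so evaluating the derived formula at $(x,y,z,w)=(1,0,0,0)$, $(0,1,0,0)$, $(0,0,1,0)$, $(0,0,0,1)$ expresses $\alpha_1,\alpha_2,\alpha_3,\alpha_4$ in the same scalars. I would then solve this small linear system to rewrite every occurrence of a contraction scalar in terms of $\alpha$ and the $\alpha_i$, and check that the result collapses exactly to the right-hand side of \eqref{3.2}, including the $(\alpha_2\alpha_3-\alpha_1\alpha_4)/\alpha$ coefficient on $(yz-wx)$.

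\textbf{Main obstacle.} The technical crux is the bookkeeping in the last step: matching the quadratic-in-parameters expression against \eqref{3.2} and confirming that the coefficient of $(yz-wx)$ simplifies to $\alpha_1-\alpha_2-\alpha_3+\alpha_4+(\alpha_2\alpha_3-\alpha_1\alpha_4)/\alpha$. A cleaner route that avoids ever naming the inner products is to argue structurally: \eqref{3.2} asserts that $|A(x,y,z,w)|$ is an affine function of $x,y,z,w$ plus a multiple of $(yz-wx)$, with no other quadratic terms and no cubic/quartic terms. The rank-two factorization guarantees via Lemma \ref{MDL} that $|A(x,y,z,w)|$ is a polynomial of total degree at most $2$ in which only $yz$ and $wx$ cross terms can appear; so the function has the form $\alpha_0+\alpha_1'x+\alpha_2'y+\alpha_3'z+\alpha_4'w+\beta\,yz+\gamma\,wx$ for constants determined by finitely many evaluations. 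Matching constant and linear coefficients to the claimed form fixes $\alpha_0,\alpha_1',\ldots$ in terms of $\alpha,\alpha_1,\ldots,\alpha_4$ via the specializations above, and one further evaluation (say at $x=w=0$, $y=z=1$, and at $x=w=1$, $y=z=0$) pins down $\beta$ and $\gamma$. I expect verifying $\beta=\gamma$ and that their common value equals the stated coefficient to be the one place where the $2\times2$ determinant computation cannot be bypassed.
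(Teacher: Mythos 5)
You were asked to prove Lemma \ref{MDL} itself, i.e.\ the identity \eqref{3.3}, but your proposal never does so: its very first substantive step (``Lemma \ref{MDL} with $m=2$ gives $|A(x,y,z,w)|=\alpha\cdot|I_2+V^TA^{-1}U|$'') \emph{invokes} exactly the statement to be proved, and everything that follows is a derivation of formula \eqref{3.2} of Theorem \ref{Th3.1} \emph{from} the lemma. As a proof of the lemma the attempt is therefore circular --- no argument for \eqref{3.3} appears anywhere. What is missing is the standard Schur-complement computation: evaluate the determinant of the block matrix
\begin{equation*}
M=\begin{pmatrix} A & -U\\ V^T & I_m \end{pmatrix}
\end{equation*}
in two ways, via the factorizations
\begin{equation*}
M=\begin{pmatrix} I_n & 0\\ V^TA^{-1} & I_m\end{pmatrix}\begin{pmatrix} A & -U\\ 0 & I_m+V^TA^{-1}U\end{pmatrix}
=\begin{pmatrix} I_n & -U\\ 0 & I_m\end{pmatrix}\begin{pmatrix} A+UV^T & 0\\ V^T & I_m\end{pmatrix},
\end{equation*}
both of which are checked by block multiplication (the first uses invertibility of $A$). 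Since a block-triangular matrix has determinant equal to the product of the determinants of its diagonal blocks, the first factorization gives $|M|=|A|\cdot|I_m+V^TA^{-1}U|$ and the second gives $|M|=|A+UV^T|$, which is \eqref{3.3}. (For the record, the paper itself offers no proof either --- it cites \cite{P} and treats the lemma as known --- but a blind proof attempt must still supply one rather than assume the conclusion.)

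A secondary remark: read instead as a proof plan for Theorem \ref{Th3.1}, your sketch is sound and close to the paper's actual argument, with one genuine economy. The paper writes the perturbation as $\U\V^T$ with the $n\times 4$ matrices $\U=[x\u_0,y\u_1,z\u_0,w\u_1]$ and $\V=[\u_0,\u_0,\u_2,\u_2]$, reducing to the $4\times 4$ determinant \eqref{3.5}, and then eliminates the four inner products $a,b,c,d$ through the specializations \eqref{3.6} --- precisely your ``eliminating the hidden scalars'' device. Your rank-$2$ factorization $U=[\mathbf{1}\mid\mathbf{f}]$, $V^T=\begin{bmatrix} x\mathbf{1}^T+z\mathbf{g}^T\\ y\mathbf{1}^T+w\mathbf{g}^T\end{bmatrix}$ reduces instead to a $2\times 2$ determinant, which makes the degree bound and the restriction of cross terms to $yz$ and $wx$ immediate. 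But none of this addresses the statement you were given.
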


\medskip
\noindent {\tt Proof of Theorem \ref{Th3.1}}.
Let's define three column vectors with components in $F$:
\begin{align*}
\u_0=[1,\cdots,1]^{T}\in F^{n-m+1},\
\u_1=[f(m),\cdots,f(n)]^{T},
\ \u_2=[g(m),\cdots,g(n)]^{T}.
\end{align*}
Then
\begin{align*}
|A(x,y,z,w)|&=|A+x\u_0\u_0^T+y\u_1\u_0^T+z\u_0\u_2^T+w\u_1\u_2^T|\\
&=|A+[x\u_0,y\u_1,z\u_0,w\u_1][\u_0,\u_0,\u_2,\u_2]^{T}|\\
&=|A+\U\V^T|,
\end{align*}
where $\U=[x\u_0,y\u_1,z\u_0,w\u_1]$ and $\V=[\u_0,\u_0,\u_2,\u_2].$
Thus, by Lemma \ref{MDL} we have
\begin{equation}\label{3.4}|A(x,y,z,w)|=|I_4+\V^TA^{-1}\U|\cdot|A|.
\end{equation}
It is easy to verify that
\begin{equation}\label{3.5}
|I_4+\V^TA^{-1}\U|=\begin{vmatrix}
ax+1 & by & az & bw \\
ax & by+1 & az & bw \\
cx & dy & cz+1 & dw \\
cx & dy & cz & dw+1
\end{vmatrix},
\end{equation}
where
\begin{equation}\label{3.6}
\begin{aligned}
a:=\u_0^TA^{-1}\u_0&=\frac{|A(1,0,0,0)|-|A|}{|A|}=\frac{\al_1}{\alpha}-1,\\
b:=\u_0^TA^{-1}\u_1&=\frac{|A(0,1,0,0)|-|A|}{|A|}=\frac{\al_2}{\alpha}-1,\\
c:=\u_2^TA^{-1}\u_0&=\frac{|A(0,0,1,0)|-|A|}{|A|}=\frac{\al_3}{\alpha}-1,\\
d:=\u_2^TA^{-1}\u_1&=\frac{|A(0,0,0,1)|-|A|}{|A|}=\frac{\al_4}{\alpha}-1,
\end{aligned}
\end{equation}
by applying Lemma \ref{MDL} with $m=1$.
Combining \eqref{3.4}-\eqref{3.6}, we immediately obtain the desired \eqref{3.2}.
This ends the proof. \qed

For any odd prime $p$, let $\ve_p$ and $h_p$ denote the fundamental unit and class number
	of the real quadratic field $\mathbb{Q}(\sqrt{p})$ respectively, and write
\begin{equation*}
	 \varepsilon_p^{h_p}=a_p+b_p\sqrt{p}\ \ \t{with}\ a_p,b_p\in\mathbb{Q},
\end{equation*}
and
\begin{equation*}
	 \varepsilon_p^{(2-(\frac{2}{p}))h_p}=a_p'+b_p'\sqrt{p}\ \ \t{with}\ a_p',b_p'\in\mathbb{Q}.
\end{equation*}

The following conjecture is due to Sun \cite[Conjecture 3.1]{S24}.

\begin{conjecture}[\cite{S24}]\label{Conj0(p-1)}
Let $p$ be an odd prime.

{\rm (i)} If $p>3$, then
\begin{equation}\begin{aligned}&\ \l|x+\l(\f{j+k}p\r)+\l(\f jp\r)y+\l(\f kp\r)z+\l(\f{jk}p\r)w\r|_{0\ls j,k\ls(p-1)/2}
\\=&\ \begin{cases}(\f 2p)2^{(p-1)/2}(pb_px+a_p(wx-(y+1)(z+1)))&\t{if}\ p\eq1\pmod4,
\\2^{(p-1)/2}((y+1)(z+1)-wx)&\t{if}\ p\eq3\pmod4.\end{cases}
\end{aligned}
\end{equation}

{\rm (ii)} We have
\begin{equation}\begin{aligned}&\ \l|x+\l(\f{j-k}p\r)+\l(\f jp\r)y+\l(\f kp\r)z+\l(\f{jk}p\r)w\r|_{0\ls j,k\ls(p-1)/2}
\\=&\ \begin{cases}a_p'(wx-(y+1)(z+1))+(\f 2p)pb_p'x&\t{if}\ p\eq1\pmod4,
\\wx+(1+y)(1-z)&\t{if}\ p\eq3\pmod4.\end{cases}
\end{aligned}
\end{equation}
\end{conjecture}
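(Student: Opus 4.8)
The plan is to recognize the conjectured determinant as a single instance of the auxiliary Theorem~\ref{Th3.1}. For both parts I would take the base matrix $A=[a_{jk}]_{0\ls j,k\ls(p-1)/2}$ with $a_{jk}=(\f{j+k}p)$ in part (i) and $a_{jk}=(\f{j-k}p)$ in part (ii), set $f(j)=g(j)=(\f jp)$, and let the indices run over $0\ls j,k\ls(p-1)/2$. Since $(\f jp)(\f kp)=(\f{jk}p)$, the determinant in Conjecture~\ref{Conj0(p-1)} is then exactly $|A(x,y,z,w)|$ in the notation \eqref{3.1}. Hence \eqref{3.2} reduces the whole four-parameter evaluation to the five scalar determinants $\al=|A|$, $\al_1=|A(1,0,0,0)|$, $\al_2=|A(0,1,0,0)|$, $\al_3=|A(0,0,1,0)|$ and $\al_4=|A(0,0,0,1)|$, provided $\al\ne0$.

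The decisive point is that the vectors $\u_1=[f(j)]_{0\ls j\ls(p-1)/2}$ and $\u_2=[g(k)]_{0\ls k\ls(p-1)/2}$ of Theorem~\ref{Th3.1} are here distinguished columns and rows of $A$ itself. Indeed $a_{j0}=(\f jp)$ shows $\u_1=A\e_0$, the $0$th column of $A$, where $\e_0$ picks out the index-$0$ coordinate; and the $0$th row satisfies $\e_0^TA=\u_2^T$ in part (i) (since $a_{0k}=(\f kp)$) and $\e_0^TA=(\f{-1}p)\u_2^T$ in part (ii) (since $a_{0k}=(\f{-1}p)(\f kp)$). Factoring $A$ out (equivalently applying the General Matrix-Determinant Lemma~\ref{MDL}) and using $\u_0^T\e_0=\e_0^T\u_0=1$ and $a_{00}=(\f0p)=0$, all three twisted determinants follow at once. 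In part (i),
$$\al_2=|A(I+\e_0\u_0^T)|=2\al,\qquad \al_3=|(I+\u_0\e_0^T)A|=2\al,\qquad \al_4=|A(I+\e_0\e_0^TA)|=\al,$$
and in part (ii) the same computation gives $\al_2=2\al$, $\al_4=\al$, and $\al_3=(1+(\f{-1}p))\al$, which equals $2\al$ when $p\eq1\pmod4$ and $0$ when $p\eq3\pmod4$.

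The only remaining inputs are the two untwisted determinants $\al$ and $\al_1$. Because $x$ enters through the rank-one matrix $\u_0\u_0^T$, the map $x\mapsto|A+x\u_0\u_0^T|$ is affine, so $\al$ and $\al_1$ determine it completely; and these are precisely the values at $x=0$ and $x=1$ of the classical determinants evaluated by Chapman~\cite{C04} (for the $(\f{j+k}p)$ shape) and by Wang, Wu and Ni~\cite{WWN} (for the $(\f{j-k}p)$ shape). For $p\eq1\pmod4$ those evaluations rest on Dirichlet's class number formula and produce exactly $a_p,b_p$ in part (i) and $a_p',b_p'$ in part (ii), together with the relevant powers of $2$ and Legendre factors $(\f2p)$; in particular $\al\ne0$, so Theorem~\ref{Th3.1} legitimately applies. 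For $p\eq3\pmod4$ the one-parameter determinant $|A+x\u_0\u_0^T|$ is constant in $x$, whence $\al_1=\al$ (in part (ii) this is forced by the skew-symmetry of $A$, via $\u_0^TA^{-1}\u_0=0$), and $\al$ equals the explicit constant $2^{(p-1)/2}$ in part (i) and $1$ in part (ii). Substituting the five values into \eqref{3.2} and expanding then yields the four stated closed forms. The hard part will be exactly this last ingredient: pinning down $\al$ and $\al_1$ in closed form and matching the normalization against the class-number data (the split between $a_p$ and $a_p'$ coming from the exponent $(2-(\f2p))h_p$, and the correct powers of $2$), since the reduction through Theorem~\ref{Th3.1} and Lemma~\ref{MDL} is otherwise mechanical.
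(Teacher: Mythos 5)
Your proposal is correct, and its backbone coincides with the paper's: the paper also settles Conjecture \ref{Conj0(p-1)} by a single application of Theorem \ref{Th3.1}, but it obtains the required base values purely by citation, remarking that Sun \cite{S24} had already proved the conjecture in the case $wx=0$ and that each of the five determinants $\al,\al_1,\al_2,\al_3,\al_4$ appearing in \eqref{3.2} is such an instance. Where you genuinely differ is that you reconstruct three of the five inputs instead of citing them: your observation that $\u_1=A\e_0$ is the $0$th column of $A$, while $\e_0^TA$ equals $\u_2^T$ in part (i) and $\legendre{-1}{p}\u_2^T$ in part (ii), lets you factor the rank-one perturbations as $A+\u_1\u_0^T=A(I+\e_0\u_0^T)$, $A+\u_0\u_2^T=\l(I+\legendre{-1}{p}\u_0\e_0^T\r)A$ and $A+\u_1\u_2^T=A(I+\e_0\e_0^TA)$, giving $\al_2=2\al$, $\al_3=\l(1+\legendre{-1}{p}\r)\al$ and $\al_4=(1+a_{00})\al=\al$ since $a_{00}=0$. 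I have checked these identities and verified that, substituted into \eqref{3.2}, they reproduce all four stated closed forms; for instance, for $p\eq3\pmod4$ in part (ii) one gets $\al_1=\al$, $\al_2=2\al$, $\al_3=0$, $\al_4=\al$, the coefficient $\al_1-\al_2-\al_3+\al_4+(\al_2\al_3-\al_1\al_4)/\al$ equals $-\al$, and the formula collapses to $\al(wx+(1+y)(1-z))$ with $\al=1$. That leaves only the $x$-only determinants $\al,\al_1$, which are exactly the evaluations of Chapman \cite{C04} (via the index reversal $j\mapsto(p+1)/2-j$; note Chapman's determinant carries the factor $\legendre{-1}{p}$, so for $p\eq3\pmod4$ you must evaluate his formula at $-x$, which is harmless since that case is constant in $x$) and of Vsemirnov and Wang--Wu--Ni \cite{V12,V13,WWN}; these also certify the hypothesis $\al\not=0$ of Theorem \ref{Th3.1} in all four cases (for $p\eq1\pmod4$ because $a_p,a_p'\not=0$, as $a_p=0$ would force $-pb_p^2=\pm1$), and your skew-symmetry argument for $\al_1=\al$ in part (ii) with $p\eq3\pmod4$ is sound because the matrix is antisymmetric of even order $(p+1)/2$ and invertible. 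The trade-off between the two routes: the paper's version is a two-line corollary of \cite{S24}, while yours is more self-contained, exposing that the $y$-, $z$- and $w$-directions are structurally trivial for these matrices (the perturbing vectors live in the column and row spaces of $A$ in an explicit way) and reducing the whole conjecture to the two classical one-parameter determinants; the normalization bookkeeping you flag as the remaining hard part is precisely what \cite{S24}, resting on \cite{C04} and \cite{WWN}, already carried out.
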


As mentioned in \cite[Remark 3.1]{S24}, Sun \cite{S24} proved the above conjecture
 in the case $wx=0$. Combining this with Theorem \ref{Th3.1}, we see that the conjecture holds
 for general $x,y,z,w$.

\section{Proofs of Theorems \ref{Th1.2} and \ref{Th1.3}}
\setcounter{lemma}{0}
\setcounter{theorem}{0}
\setcounter{corollary}{0}
\setcounter{remark}{0}
\setcounter{equation}{0}

We first prove Theorem \ref{Th1.2}.
For convenience, we set
$$n=(p-1)/2,\ \ A=\l[\l(\f{j+k}p\r)+\l(\f{j-k}p\r)\r]_{1\ls j,k\ls n},$$
and
$$\u_0=[1,1,\cdots,1]^{T}\in\R^n\ \ \t{and}\ \
\u_1=\left[\(\f{1}p\),\(\f{2}p\),\cdots,\(\f{n}p\)\right]^T.$$

\medskip
\noindent{\tt Proof of Theorem \ref{Th1.2}(i)}. Note that
$|A|=(\f 2p)p^{(p-5)/4}\not=0$ by \eqref{detA+-}.
Also,
$$\u_0=\frac{\v_2-\v_1}{2}\ \t{and}\ \u_1=\frac{\v_1+\v_2}{2}$$
by Lemma \ref{Lem2.3}, where $\v_1$ and $\v_2$ are as in Lemma \ref{Lem2.3}. Thus
$$A(-\u_0)=A\frac{\v_1-\v_2}{2}=\frac{\v_1+\v_2}{2}=\u_1,$$
$$A(-\u_1)=A\frac{-\v_1-\v_2}{2}=\frac{\v_2-\v_1}{2}=\u_0.$$
It follows that
\begin{equation*}
A^{-1}\u_1=-\u_0 \t{ and }A^{-1}\u_0=-\u_1.
\end{equation*}
Combining this with Lemma \ref{MDL}, we obtain
\begin{align*}
|A(1,0,0,0)|&=|A|(1+\u_0^TA^{-1}\u_0)=|A|(1-\u_0^T\u_1)=|A|,\\
|A(0,1,0,0)|&=|A|(1+\u_0^TA^{-1}\u_1)=|A|(1-\u_0^T\u_0)=|A|(1-n),\\
|A(0,0,1,0)|&=|A|(1+\u_1^TA^{-1}\u_0)=|A|(1-\u_1^T\u_1)=|A|(1-n),\\
|A(0,0,0,1)|&=|A|(1+\u_1^TA^{-1}\u_1)=|A|(1-\u_1^T\u_0)=|A|,
\end{align*}
where $A(x,y,z,w)$ denotes the matrix $$\l[\l(\f{j+k}p\r)+\l(\f{j-k}p\r)+x+\l(\f{j}p\r)y+\l(\f{k}p\r)z+\l(\f{jk}p\r)w\r]_{1\ls j,k\ls n}.$$
Now applying Theorem \ref{Th3.1} we immediately obtain \eqref{nice}.
This ends our proof. \qed

\medskip
\noindent{\tt Proof of Theorem \ref{Th1.2}(ii)}. Assume that $p>3$ and $p\eq3\pmod4$.
For each $j=1,\ldots,n$, clearly
$$\sum_{k=1}^{n}\(\l(\f{j+k}p\r)+\l(\f{j-k}p\r)\)=\sum_{k=-n}^{n}\l(\f{j+k}p\r)-\l(\f{j}p\r)=-\l(\f jp\r)$$
and
$$\sum_{i=1}^n\l(\f ip\r)\l(\l(\f{i+j}p\r)+\l(\f{i-j}p\r)\r)
=\sum_{i=-n}^n\l(\f{i^2+ij}p\r)=-1$$
with the aid of Lemma \ref{le3.1}.
 Thus $A\u_0=-\u_1$ and $\u_1^TA=-\u_0^T.$
 Note also that
 \begin{equation}\label{u10}\u_1^T\u_0=\sum_{j=1}^n\l(\f jp\r)=c_p
 \end{equation}
 in view of \eqref{cp}.

  Define $\th=[\theta_1,\cdots,\theta_n]^T$
with $$\theta_i=\sum_{k=1}^n\l(\l(\f{i+k}{p}\r)-\l(\f{i-k}{p}\r)-2\l(\f{k}{p}\r)\r)$$
for all $i=1,\ldots,n$.
We claim that
\begin{equation}\label{A-th}
A\th=p\u_0.
\end{equation}

Let $j\in\{1,\ldots,n\}$.
By \eqref{A-2}, we have
$$
\sum_{i=1}^n\(\l(\f{i+j}p\r)-\l(\f{i-j}p\r)\)\(\(\f{i+k}{p}\)-\(\f{i-k}{p}\)\)\\
=p\da_{jk}-2\l(\f{jk}p\r)
$$
for each $k=1,\ldots,n$. Thus
\begin{equation*}\begin{aligned}
&\ \sum_{k=1}^n\sum_{i=1}^n\(\l(\f{j+i}p\r)+\l(\f{j-i}p\r)\)\(\(\f{i+k}{p}\)-\(\f{i-k}{p}\)\)\\
=&\ \sum_{k=1}^n\l(p\da_{jk}-2\l(\f{jk}p\r)\r)
= p-2\(\f{j}{p}\)\sum_{k=1}^n\(\f{k}{p}\).
\end{aligned}\end{equation*}
It follows that the $j$-th component of the vector $A\th$ is
\begin{align*}
(A\th)_j&=\sum_{i=1}^n\(\l(\f{j+i}p\r)+\l(\f{j-i}p\r)\)\theta_i\\
&=\sum_{i=1}^n\(\l(\f{j+i}p\r)+\l(\f{j-i}p\r)\)\sum_{k=1}^n\(\(\f{i+k}{p}\)-\(\f{i-k}{p}\)\)\\
&\quad-2\sum_{i=1}^n\(\l(\f{j+i}p\r)+\l(\f{j-i}p\r)\)\sum_{k=1}^n\(\f{k}{p}\)\\
&=\sum_{k=1}^n\sum_{i=1}^n\(\l(\f{j+i}p\r)+\l(\f{j-i}p\r)\)\(\(\f{i+k}{p}\)-\(\f{i-k}{p}\)\)\\
&\quad-2\(\sum_{i=-n}^n\l(\f{j+i}p\r)-\(\f{j}{p}\)\)\sum_{k=1}^n\(\f{k}{p}\)\\
&=p-2\(\f{j}{p}\)\sum_{k=1}^n\(\f{k}{p}\)+2\(\f{j}{p}\)\sum_{k=1}^n\(\f{k}{p}\)=p.
\end{align*}
This proves \eqref{A-th}.

 Recall that $|A|=(-1)^{(h(-p)-1)/2}p^{(p-3)/4}\not=0$ by \eqref{Ap34}.
Note that $A^{-1}\u_0=p^{-1}\th$ by \eqref{A-th}.
Therefore
\begin{align*}
\u_1^TA^{-1}\u_0&=\f{1}{p}\u_1^T\th
=\f{1}{p}\sum_{j=1}^n\(\f{j}{p}\)\theta_j\\
&=\f{1}{p}\sum_{j=1}^n\(\f{j}{p}\)\sum_{k=1}^n\l(2\l(\f{j+k}{p}\r)-\l(\f{j+k}{p}\r)
-\l(\f{j-k}{p}\r)-2\l(\f kp\r)\r)\\
&=\f{1}{p}\sum_{j=1}^n\(\f{j}{p}\)\(2\sum_{k=1}^n\(\f{j+k}{p}\)-\sum_{k=-n}^n\l(\f{j+k}p\r)+\(\f{j}{p}\)-2\sum_{k=1}^n\(\f{k}{p}\)\)\\
&=\f{2}{p}\sum_{j=1}^n\sum_{k=1}^n\(\f{j^2+jk}{p}\)+\f{n}{p}-\f{2}{p}\sum_{j=1}^n\(\f{j}{p}\)\sum_{k=1}^n\(\f{k}{p}\)
\end{align*}
and hence
\begin{equation}\label{dp}\u_1^TA^{-1}\u_0=\f{n+2(d_p-c_p^2)}p
\end{equation}
with the aid of \eqref{cp}.

 Now let $A(x,y,z,w)$ denote the matrix
$$\l[\l(\f{j+k}p\r)+\l(\f{j-k}p\r)+x+\l(\f jp\r)y+\l(\f kp\r)z+\l(\f{jk}p\r)w\r]_{1\ls j,k\ls n}.$$
In view of Lemma \ref{MDL}, by using \eqref{u10} and \eqref{dp}, we get
\begin{align*}
|A(1,0,0,0)|&=|A|(1+\u_0^TA^{-1}\u_0)=|A|(1-\u_1^T\u_0)=|A|(1-c_p),\\
|A(0,1,0,0)|&=|A|(1+\u_0^TA^{-1}\u_1)=|A|(1-\u_0^T\u_0)=|A|\(1-n\),\\
|A(0,0,1,0)|&=|A|(1+\u_1^TA^{-1}\u_0)=|A|\l(1+\f{n+2(d_p-c_p^2)}p\r),\\
|A(0,0,0,1)|&=|A|(1+\u_1^TA^{-1}\u_1)=|A|(1-\u_1^T\u_0)=|A|(1-c_p).
\end{align*}
Combining this with \eqref{Ap34} and  Theorem \ref{Th3.1}, we immediately obtain the desired \eqref{xyzw34}.
This concludes the proof. \qed

To prove Theorem \ref{Th1.3}, we need the following lemma.

\begin{lemma} Let $p=2n+1$ be an odd prime. Then
\begin{equation}\label{j+k}\sum_{j,k=1}^n\l(\f{j+k}p\r)=\begin{cases}2\sum_{k=1}^n k(\f kp)&\t{if}\ p\eq1\pmod4,
\\-\sum_{k=1}^n(\f kp)&\t{if}\ p\eq3\pmod4.
\end{cases}
\end{equation}
\end{lemma}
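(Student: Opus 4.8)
The plan is to evaluate $S:=\sum_{j,k=1}^n\l(\f{j+k}p\r)$ by grouping the pairs $(j,k)$ according to the value $m=j+k$. Since $1\ls j,k\ls n$, the quantity $m$ ranges over $2,\ldots,2n=p-1$, and for fixed $m$ the admissible $j$ satisfy $\max(1,m-n)\ls j\ls\min(n,m-1)$; hence the number of representations is
\[
c(m)=\min(m-1,\,p-m)\qquad(1\ls m\ls p-1),
\]
with the convention $c(1)=0$. This gives $S=\sum_{m=1}^{p-1}c(m)\l(\f mp\r)$.

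The key step is the reflection $m\mapsto p-m$, an involution of $\{1,\ldots,p-1\}$ with no fixed point. For $1\ls m\ls n$ we have $c(m)=m-1$ and $c(p-m)=p-(p-m)=m$, together with $\l(\f{p-m}p\r)=\l(\f{-1}p\r)\l(\f mp\r)$. Pairing the term at $m$ with the term at $p-m$ therefore yields
\[
S=\sum_{m=1}^n\Bigl((m-1)+m\l(\f{-1}p\r)\Bigr)\l(\f mp\r),
\]
and it remains only to substitute $\l(\f{-1}p\r)=(-1)^{(p-1)/2}$ and split into the two residue classes.

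When $p\eq3\pmod4$ the bracket simplifies to $(m-1)-m=-1$, so $S=-\sum_{m=1}^n\l(\f mp\r)$, which is exactly the claimed value. When $p\eq1\pmod4$ the bracket equals $2m-1$, giving $S=2\sum_{m=1}^n m\l(\f mp\r)-\sum_{m=1}^n\l(\f mp\r)$; here I invoke the standard fact that $\sum_{m=1}^n\l(\f mp\r)=0$ in this case. That fact itself follows from the same reflection: $\l(\f{-1}p\r)=1$ forces $\l(\f{p-m}p\r)=\l(\f mp\r)$, so $\sum_{m=1}^n\l(\f mp\r)=\sum_{m=n+1}^{p-1}\l(\f mp\r)$, and each equals half of $\sum_{m=1}^{p-1}\l(\f mp\r)=0$.

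The one point demanding care is the combinatorial bookkeeping: verifying the formula $c(m)=\min(m-1,p-m)$ and confirming that the involution $m\mapsto p-m$ pairs the full index set $\{1,\ldots,p-1\}$ with no leftover term — it is precisely the degenerate endpoint $c(1)=0$ that makes the ranges $1\ls m\ls n$ and $n+1\ls m\ls p-1$ match up perfectly. Once the count is in hand, the remainder is a one-line case distinction, and the $p\eq1\pmod4$ subcase leans only on the elementary vanishing of $\sum_{1\ls m\ls(p-1)/2}\l(\f mp\r)$.
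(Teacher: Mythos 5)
Your proof is correct and follows essentially the same route as the paper's: both group the pairs $(j,k)$ by the value $m=j+k$, count the representations ($m-1$ for $m\ls n$ and $p-m$ beyond), apply the reflection $m\mapsto p-m$ with $\l(\f{p-m}p\r)=\l(\f{-1}p\r)\l(\f mp\r)$, and in the case $p\eq1\pmod4$ invoke the vanishing of $\sum_{m=1}^{(p-1)/2}\l(\f mp\r)$, proved by the same symmetry. Your bookkeeping, including the formula $c(m)=\min(m-1,p-m)$ and the degenerate endpoint $c(1)=0$, checks out.
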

\Proof. Observe that
\begin{align*}\sum_{j,k=1}^n\l(\f{j+k}p\r)&=\sum_{l=2}^{p-1}\l(\f lp\r)\sum_{1\ls j\ls n\atop 1\ls l-j\ls n}1
\\&=\sum_{1<l\ls n}\l(\f lp\r)\sum_{j=1}^{l-1}1+\sum_{k=1}^n\l(\f{p-k}p\r)\sum_{j=n+1-k}^n1
\\&=\sum_{l=1}^n(l-1)\l(\f lp\r)+\l(\f{-1}p\r)\sum_{k=1}^n k\l(\f kp\r)
\\&=(1+(-1)^n)\sum_{l=1}^n l\l(\f lp\r)-\sum_{l=1}^n\l(\f lp\r).
\end{align*}
If $p\eq1\pmod4$, then $(-1)^n=1$ and
$$2\sum_{l=1}^n\l(\f lp\r)=\sum_{l=1}^n\l(\l(\f lp\r)+\l(\f{p-l}p\r)\r)=\sum_{k=1}^{p-1}\l(\f kp\r)=0.$$
If $p\eq3\pmod4$, then $1+(-1)^n=0$. So we have the desired \eqref{j+k}. \qed

\medskip
\noindent{\tt Proof of Theorem \ref{Th1.3}}. Set $n=(p-1)/2$, and let $N$ denote the number of ordered pairs
$(j,k)$ with $j,k\in\{1,\ldots, n\}$ such that $(\f jp)=1=(\f{j+k}p)$. Then
\begin{align*}4N&=\sum_{j=1}^n\sum_{k=1}^n\l(1+\l(\f jp\r)\r)\l(1+\l(\f{j+k}p\r)\r)
\\&=\sum_{j=1}^n\sum_{k=1}^n\l(1+\l(\f jp\r)+\l(\f{j+k}p\r)+\l(\f{j(j+k)}p\r)\r)
\\&=n^2+n\sum_{j=1}^n\l(\f jp\r)+\sum_{j,k=1}^n\l(\f{j+k}p\r)+d_p.
\end{align*}
Combining this with \eqref{j+k}, we obtain
\begin{equation}\label{d=} d_p=4N-n^2-n\sum_{j=1}^n\l(\f jp\r)+\begin{cases}(-2)\sum_{k=1}^n k(\f kp)&\t{if}\ p\eq1\pmod4,
\\\sum_{k=1}^n(\f kp)&\t{if}\ p\eq3\pmod4.
\end{cases}
\end{equation}
When $p\eq1\pmod4$, both $n$ and $\sum_{j=1}^n(\f jp)$ are even, hence by \eqref{d=} we have
$$d_p\eq-2\sum_{k=1}^n k\l(\f kp\r)\eq-2\sum_{k=1}^n k=-n^2-n\eq -n\pmod 4.$$
If $p\eq3\pmod4$, then $n$ is odd, and hence by \eqref{d=} we have
$$d_p\eq-1+(1-n)\sum_{j=1}^n\l(\f jp\r)\eq-1+(n-1)n\eq -n\pmod4.$$
Therefore, the desired congruence \eqref{dp4} holds. \qed

\medskip

\setcounter{conjecture}{0}
\end{document}